\DeclareSymbolFont{rsfscript}{OMS}{rsfs}{m}{n}
\DeclareSymbolFontAlphabet{\mathrsfs}{rsfscript}
\renewcommand{\mathcal}{\mathrsfs}
\def\titlerunning#1{\gdef\titrun{#1}}
\def\author#1{\gdef\autrun{\def\and{\unskip, }#1}\gdef\@author{#1}}
\def\address#1{{\def\and{\\\hspace*{18pt}}\renewcommand{\thefootnote}{}%
\footnote {#1}}%
\markboth{\autrun}{\titrun}} \makeatother
\def\email#1{e-mail: #1}
\def\keywords#1{\par\medskip
\noindent\textbf{Keywords.} #1}
\DeclareMathAlphabet{\mathpzc}{OT1}{pzc}{m}{it}
\definecolor{verde}{rgb}{0,.5,0}
\DeclareSymbolFont{bbold}{U}{bbold}{m}{n}
\DeclareSymbolFontAlphabet{\mathbbold}{bbold}
\numberwithin{equation}{section}
\def\N{{\mathbb N}}
\newfont{\sss}{cmssi10 at 11pt}
\newfont{\bss}{cmssbx10 at 11pt}
\newfont{\tit}{cmitt10 at 11pt}
\newcommand{\Se}{{\bf S}}
\newcommand{\LG}{{\bf LG}}
\newcommand{\Sl}{{\bf Sl}}
\newcommand{\LSl}{{\bf LSl}}
\newcommand{\LV}{{\bf LV}}
\newcommand{\K}{{\bf K}}
\newcommand{\D}{{\bf D}}
\newcommand{\G}{{\bf G}}
\newcommand{\J}{{\bf J}}
\newcommand{\V}{{\bf V}}
\newcommand{\NA}{A^{-\N}}
\newcommand{\om}[2] {{\overline{\Omega}}_{#1}{\mathbf #2}}
\newcommand{\ome}[2] {{{\Omega}}_{#1}{\mathbf #2}}
\newcommand{\omek}[3] {{{\Omega}}^{#1}_{#2}{\mathbf #3}}
\newcommand{\mfrg}[3] {{#1}:{#2}\mathop{\hbox{\kern5pt$\circ$\kern-12pt\raise0.1pt\hbox
{$\longrightarrow$}}}{#3}}
\newcommand{\infe}[1]{{#1}^{\!{\scriptscriptstyle{-\!\infty}}}}
\newtheorem{theorem}{Theorem}[section]
\newtheorem{proposition}[theorem]{Proposition}
\newtheorem{corollary}[theorem]{Corollary}
\newtheorem{lemma}[theorem]{Lemma}
\newtheorem{definition}[theorem]{Definition}
\newenvironment{definition*}{\begin{trivlist}\item[\hskip
    \labelsep{\bf Definition\quad}]}%
  {\hfill\qed\end{trivlist}}
\newenvironment{notation*}{\begin{trivlist}\item[\hskip
    \labelsep{\bf Notation\quad}]}%
  {\end{trivlist}}
  \def\qed{{\unskip\nobreak\hfil\penalty50\hskip .001pt\hbox{}%
      \nobreak\hfil
      \vrule height 1.2ex width 1.1ex depth -.1ex
      \parfillskip=0pt\finalhyphendemerits=0\medbreak}}
\newenvironment{proof}{\begin{trivlist}\item[\hskip%
     \labelsep{\bf Proof.\quad}]}%
 {\hfill\qed\rm\end{trivlist}}
\qed\end{trivlist}}%
\begin{document}

\titlerunning{On $\kappa$-reducibility of  pseudovarieties of the form  $\bf V*\bf D$}

\title{\bf On $\kappa$-reducibility of  pseudovarieties\\  of the form  $\bf V*\bf D$}

\author{J. C. Costa %
  \and %
  C. Nogueira %
  \and %
  M. L. Teixeira%
}

\date{February 8, 2016}

\maketitle

\address{ %
  J. C. Costa \& M. L. Teixeira: %
  CMAT, Dep.\ Matem\'{a}tica e Aplica\c{c}\~{o}es, Universidade do Minho, Campus
  de Gualtar, 4710-057 Braga, Portugal; %
  \email{jcosta@math.uminho.pt, mlurdes@math.uminho.pt} %
  \and %
  C. Nogueira: %
   CMAT, Escola Superior de Tecnologia e Gest\~ao,
  Instituto Polit\'ecnico de Leiria,
  Campus 2, Morro do Lena, Alto Vieiro, 2411-901
   Leiria, Portugal; %
   \email{conceicao.veloso@ipleiria.pt} %
}

\begin{abstract}
This paper deals with the reducibility property of semidirect products of the form $\bf V*\bf D$ relatively to
graph equation systems, where \D\ denotes the pseudovariety of definite semigroups. We show that, if the
pseudovariety $\bf V$ is  reducible with respect to the canonical signature $\kappa$ consisting of the
multiplication and the $(\omega-1)$-power, then $\bf V*\bf D$ is also reducible with respect to $\kappa$.

  \keywords{Pseudovariety, definite semigroup, semidirect product, implicit signature, graph equations, reducibility.}
\end{abstract}
\section{Introduction}
A semigroup (resp.\ monoid) \emph{pseudovariety} is a class of finite semigroups (resp.\ monoids) closed under taking subsemigroups (resp.\ submonoids), homomorphic images and finite direct products. It is said to be \emph{decidable} if there is an algorithm to test membership of a finite semigroup (resp.\ monoid) in that pseudovariety.  The semidirect product of pseudovariets has been getting much attention, mainly due to the Krohn-Rhodes decomposition theorem~\cite{Krohn&Rhodes:1965}. In turn,
the pseudovarieties of the form $\V * \D $, where \D\ is the pseudovariety of all finite semigroups whose
idempotents are right zeros, are among the most studied semidirect
products~\cite{Straubing:1985,Tilson:1987,Almeida&Azevedo:1993,Almeida:1995,Almeida&Azevedo&Teixeira:2000}. For
a pseudovariety \V\ of monoids, \LV\ denotes the pseudovariety of all finite semigroups $S$ such that $eSe\in\V$
for all idempotents $e$ of $S$. We know
from~\cite{EilenbergB:1976,Straubing:1985,Therien&Weiss:1985,Tilson:1987} that $\V *\D$ is contained in \LV\ and
that $\V *\D=\LV$ if and only if \V\ is \emph{local} in the sense of Tilson~\cite{Tilson:1987}. In particular,
the equalities $\Sl *\D=\LSl$ and $\G *\D=\LG$ hold for the pseudovarieties $\Sl$ of semilattices and $\G$  of
groups.

It is known that the semidirect product operator does not preserve decidability of pseudovarieties
\cite{Rhodes:1999,Auinger&Steinberg:2003}. The notion of \emph{tameness} was introduced by Almeida and
Steinberg~\cite{Almeida&Steinberg:2000,Almeida&Steinberg:2000b}  as a tool for proving decidability of
semidirect products. The fundamental property for tameness is \emph{reducibility}. This property was originally
formulated in terms of graph equation systems and latter extended to any system of
equations~\cite{Almeida:2002,Rhodes&Steinberg:2009}. It is parameterized by an \emph{implicit signature}
$\sigma$ (a set of implicit operations on semigroups containing the multiplication), and
 we speak of $\sigma$-reducibility.  For short, given an equation system $\Sigma$ with rational constraints,
 a pseudovariety $\V$ is $\sigma$-reducible relatively to $\Sigma$
  when the existence of a solution of $\Sigma$ by implicit operations over ${\bf V}$
 implies the existence of a solution  of $\Sigma$ by $\sigma$-words over ${\bf V}$ and satisfying the same constraints.  The
pseudovariety \V\ is said to be \emph{$\sigma$-reducible} if it is $\sigma$-reducible with respect to every
finite graph equation system. The implicit signature which is most commonly encountered in the literature is the
\emph{canonical signature} $\kappa=\{ab,a^{\omega-1}\}$ consisting of the multiplication and the
$(\omega-1)$-power. For instance,  the pseudovarieties ${\bf D}$~\cite{Almeida&Zeitoun:2003}, ${\bf
G}$~\cite{Ash:1991,Almeida&Steinberg:2000b}, ${\bf J}$~\cite{Almeida:1995,Almeida:2002} of all finite ${\cal
J}$-trivial semigroups, ${\bf LSl}$~\cite{Costa&Teixeira:2004} and ${\bf R}$~\cite{Almeida&Costa&Zeitoun:2005}
of all finite ${\cal R}$-trivial semigroups are $\kappa$-reducible.

In this paper, we study the $\kappa$-reducibility property of semidirect products of the form $\V * \D$. This research is essentially inspired by the papers~\cite{Costa&Nogueira&Teixeira:2015b,Costa&Teixeira:2004} (see also~\cite{Costa&Nogueira:2009} where a stronger form of $\kappa$-reducibility was established for $\LSl$). We prove that,  if \V\  is $\kappa$-reducible then $\V*\D$ is  $\kappa$-reducible. In particular, this gives a new
and simpler  proof (though with the same basic idea) of the $\kappa$-reducibility of $\LSl$ and establishes the
$\kappa$-reducibility of the pseudovarieties $\LG$, $\J*\D$ and ${\bf R}*\D$. Combined with the recent proof that the
 $\kappa$-word problem for ${\bf LG}$ is decidable~\cite{Costa&Nogueira&Teixeira:2015}, this shows that $\LG$ is $\kappa$-tame, a problem proposed by Almeida a few years ago. This also extends part of our work
in the paper~\cite{Costa&Nogueira&Teixeira:2015b}, where we proved that under mild hypotheses on an implicit
signature $\sigma$,  if \V\  is  $\sigma$-reducible relatively to \emph{pointlike} systems of equations (i.e.,
systems of equations of the form $x_1=\cdots=x_n$) then $\V*\D$ is pointlike $\sigma$-reducible as well. As
in~\cite{Costa&Nogueira&Teixeira:2015b}, we use results from~\cite{Almeida&Costa&Teixeira:2010}, where various
kinds of $\sigma$-reducibility of semidirect products with an order-computable pseudovariety were considered.
More specifically, we know from~\cite{Almeida&Costa&Teixeira:2010} that a pseudovariety of the form $\V*\D_k$ is
$\kappa$-reducible when \V\  is $\kappa$-reducible, where $\D_k$ is the order-computable pseudovariety defined
by the identity $yx_1\cdots x_k=x_1\cdots x_k$. As $\V*\D=\bigcup_{k}\V*\D_k$, we utilize this result as a way
to achieve our property concerning the pseudovarieties $\V*\D$. The method used in this paper is similar to that of~\cite{Costa&Nogueira&Teixeira:2015b}. However, some significant changes, inspired by~\cite{Costa&Teixeira:2004}, had to be introduced in order to deal with the much more intricate graph equation systems.
\section{Preliminaries}\label{section:preliminaries}
The reader is referred to the standard bibliography on finite semigroups,
namely~\cite{Almeida:1995,Rhodes&Steinberg:2009}, for general background and undefined terminology. For basic
definitions and results about combinatorics on words, the reader may wish to consult~\cite{Lothaire:2002}.

\subsection{Words and pseudowords}\label{subsection:pseudowords}
Throughout this paper, $A$ denotes a  finite non-empty set called an \emph{alphabet}. The \emph{free semigroup}
and the \emph{free monoid} generated by $A$ are denoted respectively by $A^+$ and $A^*$. The empty word is
represented by $1$ and the length of a word $w \in A^*$ is denoted by $|w|$. A word is called {\em primitive} if
it cannot be written in the form $u^n$ with $n>1$.  Two words $u$ and $v$ are said to be {\em conjugate} if
 $u=w_1w_2$ and $v=w_2w_1$ for some words $w_1,w_2\in A^*$. A {\em Lyndon word} is a primitive word
which is minimal in its conjugacy class, for the lexicographic order on $A^+$.

A  {\em left-infinite} word on $A$ is a sequence $w=(a_n)_{n}$ of letters of $A$ indexed by  $-\N$ also written
$w=\cdots a_{-2}a_{-1}$. The set of all left-infinite words on $A$ will be denoted  by $\NA$ and we put
$A^{-\infty}=A^+\cup\NA$. The set $A^{-\infty}$ is endowed with a semigroup structure by defining a product as
follows: if $w,z\in A^+$, then $wz$ is already defined; left-infinite words are right zeros; finally, if
$w=\cdots a_{-2}a_{-1}$ is a left-infinite word and $z=b_1b_2\cdots b_n$ is a finite word, then $wz$ is the
left-infinite word $wz=\cdots a_{-2}a_{-1}b_1b_2\cdots b_n$. A left-infinite word $w$ of the form
$\infe{u}v=\cdots uuuv$, with $u\in A^+$ and $v\in A^*$, is said to be {\em ultimately periodic}. In case $v=1$,
the word $w$ is named {\em periodic}. For a periodic word $w=\infe{u}$, if $u$ is a primitive word, then it will
be called the {\em root} of $w$ and its length $|u|$ will be said to be the \emph{period} of $w$.

For a pseudovariety \V\ of semigroups, we denote by  $\om{A}{\V}$ the relatively free
pro-\V\  semigroup   generated by the set $A$: for each pro-\V\ semigroup $S$ and each function $\varphi:A\to
S$, there is a unique continuous homomorphism $\overline{\varphi}:\om {A}{\V}\to S$ extending $\varphi$.  The
elements of  $\om{A}{\V}$ are called \emph{pseudowords} (or \emph{implicit operations}) over \V. A pseudovariety \V\ is called {\em order-computable} when the subsemigroup  $\ome{A}{\V}$ of $\om{A}{\V}$ generated by $A$ is finite, in which case $\ome{A}{\V}=\om{A}{\V}$, and effectively computable. Recall that, for the pseudovariety
$\Se$ of all finite semigroups, $\ome{A}{\Se}$  is (identified with) the free semigroup $A^+$. The elements of  $\om{A}{\Se}\setminus A^+$ will then be called \emph{infinite pseudowords}.

A \emph{pseudoidentity} is a formal equality $\pi=\rho$ of pseudowords $\pi,\rho\in\om{A}{\Se}$ over $\Se$.  We say that $\V$ \emph{satisfies} the pseudoidentity $\pi=\rho$, and write $\V\models \pi=\rho$, if $\varphi\pi=\varphi\rho$ for every continuous homomorphism $\varphi:\om{A}{\Se}\to S$ into a semigroup $S\in\V$, which is equivalent to saying that  $p_{\V} \pi=p_{\V}\rho$ for the \emph{natural
projection} $p_\V:\om{A}{\Se}\rightarrow \om{A}{\V}$.

\subsection{Pseudoidentities over $\V*\D_k$}\label{subsection:pseudoidentities_V*Dk}
For a positive integer $k$, let $\D_k$ be the pseudovariety of all finite semigroups satisfying the identity
$yx_1\cdots x_k=x_1\cdots x_k$. Denote by $A^k$ the set of words over $A$ with length $k$ and by $A_k$ the set
$\{w\in A^+: |w|\leq k\}$  of non-empty words over $A$ with length at most $k$. We notice that $\ome{A}{\D_k}$
may be identified with the semigroup whose support set is $A_k$ and whose
 multiplication is given by $u\cdot v={\mathtt t}_k(uv)$, where  ${\mathtt t}_k w$ denotes the longest
 suffix of length at most $k$ of a given (finite or left-infinite) word $w$. Then, the $\D_k$ are order-computable pseudovarieties such that $\D=\bigcup_{k}\D_k$.  Moreover, it is well-known that $\om{A}{\D}$  is isomorphic to the semigroup  $A^{-\infty}$.

 For each pseudoword $\pi\in\om{A}{\Se}$, we denote by  ${\mathtt t}_k\pi$ the unique smallest word (of $A_{k}$) such that $\D_k\models\pi={\mathtt t}_k\pi$. Simetrically, we denote by  ${\mathtt i}_k\pi$ the smallest word (of $A_{k}$) such that $\K_k\models\pi={\mathtt i}_k\pi$,  where $\K_k$ is the dual pseudovariety of $\D_k$ defined by the identity
$x_1\cdots x_ky=x_1\cdots x_k$. Let $\Phi_k$ be the function $A^+\rightarrow (A^{k+1})^*$  that sends each word
$w\in A^+$ to the sequence of factors of length $k+1$ of $w$, in the order they occur in $w$. We still denote by
$\Phi_k$ (see~\cite{Almeida&Azevedo:1993} and~\cite[Lemma 10.6.11]{Almeida:1995}) its unique continuous
extension $\om{A}{\Se}\rightarrow (\om{A^{k+1}}{\Se})^1$. This function $\Phi_k$ is a $k$-\emph{superposition
homomorphism}, with the meaning that it verifies the conditions:
\begin{enumerate}\vspace{-0.2cm}
  \item[i)] $\Phi_k w=1$ for every $w\in A_{k}$;\vspace{-0.2cm}
  \item[ii)]  $\Phi_k(\pi\rho)=\Phi_k \pi \Phi_k\bigl(({\mathtt t}_k\pi)\rho\bigr)= \Phi_k\bigl(\pi({\mathtt i}_k\rho)\bigr) \Phi_k \rho$ for every $\pi,\rho\in \om{A}{\Se}$.\vspace{-0.1cm}
\end{enumerate}

Throughout the paper, $\V$ denotes a non-locally trivial pseudovariety of semigroups. For any pseudowords
$\pi,\rho\in \om{A}{\Se}$, it is known from~\cite[Theorem~10.6.12]{Almeida:1995} that
\begin{equation}\label{eq:char-psid-VDk}
\V*\D_k\models \pi=\rho\quad\Longleftrightarrow\quad {\mathtt i}_k\pi={\mathtt i}_k\rho,\ {\mathtt
t}_k\pi={\mathtt t}_k\rho\mbox{ and }\V\models \Phi_k\pi=\Phi_k\rho.
\end{equation}

\subsection{Implicit signatures and $\sigma$-reducibility}\label{subsection:signatures}
 By an \emph{implicit signature}  we mean a set $\sigma$ of pseudowords (over $\Se$) containing the
 multiplication. In particular, we represent by $\kappa$ the implicit signature $\{ab,a^{\omega-1}\}$, usually called the
  \emph{canonical signature}. Every profinite semigroup has a natural structure of a $\sigma$-algebra, via the natural interpretation of
pseudowords on profinite semigroups.  The $\sigma$-subalgebra of $\om{A}{\Se}$ generated by $A$ is denoted
 by $\omek{\sigma}{A}{\Se}$. It is freely generated by $A$ in the variety of $\sigma$-algebras generated by the pseudovariety $\Se$
 and its elements are called $\sigma$-\textit{words} (over \Se).  To a  (directed multi)graph $\Gamma=V(\Gamma)\uplus E(\Gamma)$, with vertex set $V(\Gamma)$,
  edge set $E(\Gamma)$, and edges $\alpha {\sf e}\xrightarrow {\sf e}\omega {\sf e}$, we associate the system $\Sigma_\Gamma$ of
all equations of the form $(\alpha {\sf e})\, {\sf e}= \omega {\sf e}$, with ${\sf e}\in E(\Gamma)$.  Let $S$ be a finite $A$-generated semigroup, $\delta:\om{ A}{\Se}\to S$ be the continuous homomorphism respecting the choice of generators and $\varphi:\Gamma\to S^1$ be an evaluation mapping such that $ \varphi E(\Gamma)\subseteq S$. We say that a mapping $\eta:\Gamma\to (\om{ A}{\Se})^1$ is a {\em
\V-solution} of $\Sigma_\Gamma$ with respect to $(\varphi,\delta)$ when $\delta\eta=\varphi$ and $\V\models
\overline{\eta} u =\overline{\eta} v$ for all $(u=v)\in\Sigma_\Gamma$. Furthermore, if  $\eta \Gamma\subseteq(\omek{\sigma}{A}{S})^1$ for an implicit signature~$\sigma$, then  $\eta$ is called a $(\V,\sigma)$-\emph{solution}. The pseudovariety \V\ is said to be {\em $\sigma$-reducible} relatively to the system $\Sigma_\Gamma$ if the existence of a \V-solution of $\Sigma_\Gamma$ with respect to a pair $(\varphi,\delta)$ entails the existence of a $(\V
,\sigma)$-solution of $\Sigma_\Gamma$ with respect to the same pair $(\varphi,\delta)$.  We say that $\V$ is
\emph{$\sigma$-reducible}, if it is $\sigma$-reducible relatively to $\Sigma_\Gamma$ for all finite graphs $\Gamma$.
\section{$\kappa$-reducibility of $\V*\D$}
Let $\V$ be a given $\kappa$-reducible non-locally trivial pseudovariety. The purpose of this paper is to prove
the $\kappa$-reducibility of the pseudovariety $\V*\D$. So, we fix a finite graph $\Gamma$ and a finite
$A$-generated semigroup $S$ and consider a $\V*\D$-solution $\eta:\Gamma\to (\om{A}{\Se})^1$  of the system
$\Sigma_\Gamma$ with respect to a pair $(\varphi,\delta)$, where $\varphi:\Gamma\to S^1$ is an evaluation
mapping such that $\varphi E(\Gamma) \subseteq S$ and $\delta:\om{ A}{\Se}\to S$ is a continuous homomorphism
respecting the choice of generators. We have to construct a $(\V*\D,\kappa)$-solution $\eta':\Gamma\to
(\omek{\kappa}{A}{S})^1$ of $\Sigma_\Gamma$ with respect to the same pair $(\varphi,\delta)$.
\subsection{Initial considerations}\label{section:initial_considerations}
Suppose that ${\sf g}\in\Gamma$ is such that $\eta{\sf g}=u$ with $u\in A^*$. Since $\eta$ and $\eta'$ are
supposed to be $\V*\D$-solutions of the system $\Sigma_\Gamma$ with respect to $(\varphi,\delta)$, we must have
$\delta\eta=\varphi=\delta\eta'$ and so, in particular,
 $\delta\eta'{\sf g}=\delta u$. As the homomorphism  $\delta:\om{A}{\Se}\to {S}$ is arbitrarily fixed, it may happen that the equality $\delta\eta'{\sf g}=\delta u$ holds only when $\eta'{\sf g}=u$.  In that case we would be obliged to define $\eta'{\sf g}=u$.
 Since we want to describe an algorithm to define $\eta'$ that should work for any given graph and solution, we will then construct a solution $\eta'$
 verifying the following condition:
 \begin{equation}
 \forall {\sf g}\in\Gamma,\quad (\eta{\sf g}\in A^*\Longrightarrow \eta'{\sf g}=\eta{\sf g}).\tag*{$\mathcal C_1(\Gamma,\eta,\eta')$}
 \end{equation}

Suppose next that a vertex ${\sf v}\in V(\Gamma)$ is such that $\D\models\eta{\sf v}=u^\omega$ with $u\in A^+$, that is, suppose that $p_\D\eta{\sf v}=\infe{u}$. Because $\Gamma$ is an arbitrary graph, it could include, for instance, an edge ${\sf e}$ such that $\alpha {\sf
e}=\omega{\sf e}={\sf v}$ and the labeling $\eta$ could be such that $\eta{\sf e}=u$. Since $\D$ is a
subpseudovariety of $\V*\D$, $\eta$ is a $\D$-solution of $\Sigma_\Gamma$ with respect to
$(\varphi,\delta)$. Hence, as by condition $\mathcal C_1(\Gamma,\eta,\eta')$ we want to preserve finite labels, it would
follow in that case that $\D\models(\eta'{\sf v})u=\eta'{\sf v}$ and, thus, that $\D\models\eta'{\sf
v}=u^\omega=\eta{\sf v}$. This observation suggests that we should preserve the projection into $\om{A}{\D}$ of labelings of
vertices ${\sf v}$ such that  $p_\D\eta{\sf v}=\infe{u}$ with $u\in A^+$. More generally, we will construct the
$(\V*\D,\kappa)$-solution $\eta'$ in such a way that the following condition holds:
\begin{equation}
 \forall {\sf v}\in V(\Gamma),\quad (p_\D\eta{\sf v}=\infe{u} z\mbox{ with $u\in A^+$ and $z\in A^*$}\Longrightarrow p_\D\eta'{\sf v}=p_\D\eta{\sf v}).\tag*{$\mathcal C_2(\Gamma,\eta,\eta')$}
 \end{equation}

Let $\ell_\eta=\mbox{max}\{|u|: \mbox{$u\in A^*$ and $\eta{\sf g}=u$ for some ${\sf g}\in\Gamma$}\}$ be the
maximum length of finite labels under $\eta$ of elements of $\Gamma$. To be able to make some reductions on the
graph $\Gamma$ and solution $\eta$, described in Section~\ref{section:simplifications}, we want $\eta'$ to
verify the extra condition below, where $L\geq\ell_\eta$ is a non-negative integer to be specified later, on
Section~\ref{section:borders}:
\begin{equation}
 \forall {\sf v}\in V(\Gamma),\quad (\eta{\sf v}=u\pi\mbox{ with $u\in A_L$}\Longrightarrow \eta'{\sf v}=u\pi'\mbox{ with $\delta \pi=\delta \pi'$}).\tag*{$\mathcal C_3(\Gamma,\eta,\eta')$}
 \end{equation}
\subsection{Simplifications on the solution $\eta$}\label{section:simplifications}
We begin this section by reducing to the case in which all vertices of $\Gamma$ are labeled by infinite
pseudowords under $\eta$. Suppose first that there is an edge ${\sf v}\xrightarrow {\sf e}{\sf w}$ such that
$\eta{\sf v}=u_{\sf v}$ and $\eta{\sf e}=u_{\sf e}$ with $u_{\sf v}\in A^*$ and $u_{\sf e}\in A^+$, so that
$\eta{\sf w}=u_{\sf v}u_{\sf e}$. Drop the edge ${\sf e}$ and consider the restrictions $\eta_1$ and
$\varphi_1$, of $\eta$ and $\varphi$ respectively, to the graph $\Gamma_{\!1}=\Gamma\setminus\{{\sf e}\}$. Then
$\eta_1$ is a $\V*\D$-solution of the system $\Sigma_{\Gamma_{\!1}}$ with respect to the pair
$(\varphi_1,\delta)$. Assume that there is a $(\V*\D,\kappa)$-solution $\eta'_1$ of $\Sigma_{\Gamma_{\!1}}$ with
respect to $(\varphi_1,\delta)$ verifying condition $\mathcal C_1(\Gamma_{\!1},\eta_1,\eta'_1)$. Then
$\eta'_1{\sf v}=u_{\sf v}$ and $\eta'_1{\sf w}=u_{\sf v}u_{\sf e}$. Let $\eta'$ be the extension of $\eta'_1$ to
$\Gamma$ obtained by letting $\eta'{\sf e}=u_{\sf e}$. Then $\eta'$ is a $(\V*\D,\kappa)$-solution  of
$\Sigma_{\Gamma}$ with respect to $(\varphi,\delta)$. By induction on the number of edges labeled by finite
words under $\eta$ beginning in vertices also labeled by finite words under $\eta$, we may therefore assume that
there are no such edges in $\Gamma$.

Now, we remove all vertices ${\sf v}$ of $\Gamma$  labeled by finite words under $\eta$ such that ${\sf v}$ is not the beginning of an edge, thus obtaining a graph $\Gamma_1$. As above, if $\eta'_1$ is a $(\V*\D,\kappa)$-solution of $\Sigma_{\Gamma_{\!1}}$, then we build a $(\V*\D,\kappa)$-solution $\eta'$ of $\Sigma_{\Gamma}$ by letting $\eta'$ coincide with
$\eta'_1$ on $\Gamma_1$ and letting $\eta'{\sf v}=\eta{\sf v}$  for each vertex ${\sf v}\in\Gamma\setminus \Gamma_1$. So, we may assume that all vertices of $\Gamma$  labeled by finite words under $\eta$  are the beginning of some edge.

Suppose next that ${\sf v}\xrightarrow {\sf e}{\sf w}$ is an edge such that $\eta{\sf v}=u$ and $\eta{\sf
e}=\pi$ with $u\in A^*$ and $\pi\in \om{ A}{\Se}\setminus A^+$. Notice that, since it is an infinite pseudoword,
$\pi$ can be written as $\pi=\pi_1\pi_2$ with both $\pi_1$ and $\pi_2$ being infinite pseudowords.  Drop the
edge ${\sf e}$ (and the vertex ${\sf v}$ in case ${\sf e}$ is the only edge beginning in ${\sf v}$) and let
${\sf v}_1$ be a new vertex and  ${\sf v}_1\xrightarrow {{\sf e}_1}{\sf w}$  be a new edge thus obtaining a new
graph $\Gamma_{\!1}$. Let $\eta_1$ and $\varphi_1$ be the labelings of $\Gamma_{\!1}$ defined as follows:
\begin{itemize}

\item  $\eta_1$ and $\varphi_1$  coincide,
respectively, with $\eta$ and $\varphi$ on $\Gamma'=\Gamma_{\!1}\cap \Gamma$;

\item    $\eta_1{\sf v}_1=u\pi_1$, $\eta_1{\sf e}_1=\pi_2$, $\varphi_1{\sf v}_1=\delta\eta_1{\sf v}_1$ and $\varphi_1{\sf
e}_1=\delta\eta_1{\sf e}_1$.
\end{itemize}
Then $\eta_1$ is  a $\V*\D$-solution of the system $\Sigma_{\Gamma_{\!1}}$ with respect to the pair
$(\varphi_1,\delta)$. Assume that there is a $(\V*\D,\kappa)$-solution $\eta'_1$ of $\Sigma_{\Gamma_{\!1}}$ with
respect to $(\varphi_1,\delta)$ verifying conditions $\mathcal C_1(\Gamma_{\!1},\eta_1,\eta'_1)$ and $\mathcal
C_3(\Gamma_{\!1},\eta_1,\eta'_1)$. In particular, since $L$ is chosen to be greater than $\ell_\eta$,
$\eta'_1{\sf v}_1=u\pi'_1$ with $\delta \pi_1=\delta \pi'_1$. Let $\eta'$ be the extension of
${\eta'_1}_{|\Gamma'}$ to $\Gamma$ obtained by letting $\eta'{\sf e}=\pi'_1(\eta'_1{\sf e}_1)$ (and $\eta'{\sf
v}=u$ in case ${\sf v}\not\in\Gamma'$). As one can easily verify, $\eta'$ is a $(\V*\D,\kappa)$-solution  of
$\Sigma_{\Gamma}$ with respect to $(\varphi,\delta)$. By induction on the number of edges beginning in vertices
labeled by finite words under $\eta$, we may therefore assume that all vertices of $\Gamma$ are labeled by
infinite pseudowords under $\eta$.

Suppose at last that an edge  ${\sf e}\in\Gamma$ is labeled under $\eta$ by a finite word $u=a_1\cdots a_n$, where
$n>1$ and $a_i\in A$. Denote ${\sf v}_0=\alpha{\sf e}$ and ${\sf v}_n=\omega{\sf e}$. In this case, we drop the
edge ${\sf e}$ and, for each $i\in\{1,\ldots,n-1\}$, we add a new vertex ${\sf v}_i$ and a new edge  ${\sf v}_{i-1}\xrightarrow{{\sf
e}_i}{\sf v}_i$ to the graph $\Gamma$. Let $\Gamma_{\!1}$ be the graph thus obtained and let $\eta_1$ and $\varphi_1$ be the labelings of $\Gamma_{\!1}$ defined as follows:
\begin{itemize}

\item  $\eta_1$ and $\varphi_1$  coincide, respectively, with $\eta$ and $\varphi$ on $\Gamma'=\Gamma\setminus \{{\sf e}\}$;

\item  for each $i\in\{1,\ldots,n-1\}$, $\eta_1{\sf v}_i=(\eta{\sf v})a_1\cdots a_i$, $\eta_1{\sf e}_i=a_i$, $\varphi_1{\sf v}_i=\delta\eta_1{\sf v}_i$
 and $\varphi_1{\sf e}_i=\delta\eta_1{\sf e}_i$.
\end{itemize}
Hence, $\eta_1$ is  a $\V*\D$-solution of the system $\Sigma_{\Gamma_{\!1}}$ with respect to the pair
$(\varphi_1,\delta)$. Suppose there exists a $(\V*\D,\kappa)$-solution $\eta'_1$ of $\Sigma_{\Gamma_{\!1}}$ with respect to
$(\varphi_1,\delta)$ verifying condition  $\mathcal C_1(\Gamma_{\!1},\eta_1,\eta'_1)$. Let $\eta'$ be the extension of
${\eta'_1}_{|\Gamma'}$ to $\Gamma$ obtained by letting $\eta'{\sf e}=u$. Then $\eta'$ is a $(\V*\D,\kappa)$-solution  of
$\Sigma_{\Gamma}$ with respect to $(\varphi,\delta)$. By induction on the number of edges labeled by finite words under $\eta$, we may further assume that each edge of $\Gamma$ labeled by a finite word under $\eta$ is, in fact, labeled by a letter of the alphabet.

\subsection{Borders of the solution $\eta$}\label{section:borders}
The main objective of this section is to define a certain class of finite words, called \emph{borders of the
solution $\eta$}. Since the equations (of $\Sigma_\Gamma$) we have to deal with are of the form $(\alpha {\sf
e})\, {\sf e}= \omega {\sf e}$, these borders will serve to signalize the transition from a vertex $\alpha {\sf
e}$ to the edge ${\sf e}$.

 For each vertex ${\sf v}$ of $\Gamma$, denote by ${\bf d}_{\sf
v}\in\NA$ the projection $p_\D\eta {\sf v}$ of $\eta {\sf v}$ into $\om A{\D}$ and  let
 $D_\eta=\{{\bf d}_{\sf v}\mid {\sf v}\in V(\Gamma)\}$. We say that two left-infinite words $v_1,v_2\in\NA$ are {\em confinal} if they have a common prefix $y\in\NA$,
that is, if $v_1=yz_1$ and $v_2=yz_2$ for some words $z_1,z_2\in A^*$. As one easily verifies, the relation
$\propto$ defined, for each ${\bf d}_{{\sf v}_1}, {\bf d}_{{\sf v}_2}\in D_\eta$, by
$${\bf d}_{{\sf v}_1}\propto {\bf d}_{{\sf v}_2}\quad\mbox{if and only if\quad ${\bf d}_{{\sf v}_1}$ and ${\bf d}_{{\sf v}_2}$ are confinal}$$
is an equivalence on $D_\eta$. For each $\propto$-class $\Delta$, we fix  a word $y_\Delta\in\NA$ and words
$z_{\sf v}\in A^*$, for each vertex ${\sf v}$ with ${\bf d}_{\sf v}\in \Delta$, such that
$${\bf d}_{\sf v}=y_\Delta z_{\sf v}.$$
Moreover, when ${\bf d}_{\sf v}$ is ultimately periodic, we choose $y_\Delta$ of the form $\infe{u}$, with $u$ a
Lyndon word, and fix $z_{\sf v}$ not having $u$ as a prefix. The word $u$ and its length $|u|$ will  be said to be, respectively, a \emph{root} and a
\emph{period} of the solution $\eta$. Without loss of generality, we assume that $\eta$ has at least one root
(otherwise we could, easily, modify the graph and the solution in order to include one).

We fix a few of the integers that will be used in the construction of the $(\V*\D,\kappa)$-solution $\eta'$.
They depend only on the mapping $\eta$ and on the semigroup $S$.
\begin{definition}[constants $n_S$, $p_\eta$, $L$, $E$   and $Q$] \label{def:constants} We let:
 \begin{enumerate}[label=$\ \;\bullet$,labelsep=*,leftmargin=4ex,parsep=0ex]
\item $n_{\;\!\!S}$ be the \emph{exponent} of $S$ which, as one recalls, is the least integer
such that $s^{n_S}$ is idempotent for every element $s$ of the finite $A$-generated semigroup $S$;

\item $p_\eta={\rm lcm}\{|u|: \mbox{ $u\in A^+$ is a root of $\eta$}\}$;

\item $L={\rm max}\{\ell_\eta,|z_{\sf v}|: {\sf v}\in V(\Gamma)\}$;

\item $E$ be an integer such that $E\geq n_{\;\!\!S}p_\eta$ and, for each word $w\in A^E$, there is a factor $e\in A^+$ of $w$ for which
$\delta e$ is an idempotent of $S$. Notice that, for each root $u$ of $\eta$, $|u^{n_{\;\!\!S}}|\leq E$ and $\delta
(u^{n_{\;\!\!S}})$ is an idempotent of $S$;

\item $Q=L+E$.
\end{enumerate}
\end{definition}

 For each positive integer $m$, we denote by $B_{m}$ the set
$$B_{m}=\{{\mathtt t}_my_\Delta\in A^m\mid \mbox{$\Delta$ is a $\propto$-class}\}.$$
If $y_\Delta=\infe{u}$ is a periodic left-infinite word, then the element $y={\mathtt t}_my_\Delta$ of $B_{m}$
will be said to be \emph{periodic} (with \emph{root} $u$ and \emph{period} $|u|$).  For words $y_1,y_2\in B_m$,
we define the \emph{gap between $y_1$ and $y_2$} as the positive integer
$$g(y_1,y_2)=\mbox{min}\{|u|\in\N: \mbox{$u\in A^+$ and, for some $v\in A^+$, $y_1u=vy_2$ or $y_2u=vy_1$}\},$$
and notice that $g(y_1,y_2)=g(y_2,y_1)\leq m$.

\begin{proposition}\label{prop:borders}
Consider the constant $Q$ introduced in Definition~\ref{def:constants}.  There exists $q_Q\in\N$ such that for all
integers $m\geq q_Q$ the following conditions hold:
\begin{enumerate}
  \item If $y_1$ and $y_2$ are distinct elements of $B_m$, then   $g(y_1,y_2)>Q$;
  \item If  $y$ is a non-periodic element of $B_m$, then  $g(y,y)>Q$.
\end{enumerate}
\end{proposition}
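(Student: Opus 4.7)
The plan is to argue by contradiction, using pigeonhole on the finitely many parameters that govern the situation. Suppose that, for every candidate $q_Q$, at least one of (1), (2) fails at some $m\ge q_Q$; then there is an infinite sequence $m_1<m_2<\cdots$ with the following property: for each $i$ one can pick $y_1^{(i)},y_2^{(i)}\in B_{m_i}$ (distinct in case (1), equal and non-periodic in case (2)) together with words $u^{(i)},v^{(i)}\in A^+$ of length $q^{(i)}=g(y_1^{(i)},y_2^{(i)})\le Q$ satisfying either $y_1^{(i)}u^{(i)}=v^{(i)}y_2^{(i)}$ or $y_2^{(i)}u^{(i)}=v^{(i)}y_1^{(i)}$. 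Since $V(\Gamma)$ is finite, there are only finitely many $\propto$-classes, and since $q^{(i)}\le Q$ and $A$ is finite there are only finitely many candidates for $u^{(i)}$. Passing to an infinite subsequence, we may therefore assume that the classes $\Delta_1,\Delta_2$ yielding $y_j^{(i)}={\mathtt t}_{m_i}y_{\Delta_j}$, the length $q=q^{(i)}$, the orientation, and the word $u=u^{(i)}$ are all constant in $i$; without loss of generality, the first orientation $y_1^{(i)}u=v^{(i)}y_2^{(i)}$ holds.

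Reading the equation $y_1^{(i)}u=v^{(i)}y_2^{(i)}$ letter by letter, one sees that $|v^{(i)}|=q$ and that the letters of $y_1^{(i)}$ in positions $q+1,\ldots,m_i$ coincide with those of $y_2^{(i)}$ in positions $1,\ldots,m_i-q$. Translated to the underlying left-infinite words $y_{\Delta_1},y_{\Delta_2}$, this gives $y_{\Delta_1}[j+q]=y_{\Delta_2}[j]$ for $-m_i+1\le j\le -q$. Letting $i\to\infty$ yields the limiting shift relation
\[
y_{\Delta_2}[j]=y_{\Delta_1}[j+q]\quad\text{for all }j\le -q.
\]
Setting $w=y_{\Delta_2}[-q+1]\cdots y_{\Delta_2}[0]\in A^q$, this amounts to the equality $y_{\Delta_2}=y_{\Delta_1}\cdot w$ of left-infinite words, so $y_{\Delta_1}$ and $y_{\Delta_2}$ are confinal.

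In case (1), the confinality of $y_{\Delta_1}$ and $y_{\Delta_2}$ contradicts the assumption that the classes $\Delta_1\ne\Delta_2$ are distinct. In case (2), where $\Delta_1=\Delta_2=\Delta$, the equality $y_\Delta=y_\Delta\cdot w$ with $|w|=q\ge 1$ unfolds to $y_\Delta[j]=y_\Delta[j+q]$ for all $j\le -q$, showing that $y_\Delta$ is ultimately periodic; by our convention for choosing representatives, $y_\Delta$ would then have been chosen of the form $\infe{u'}$ for some Lyndon word $u'$, i.e.\ periodic, contradicting the hypothesis. The only delicate point is the pigeonhole bookkeeping needed to stabilize all the parameters $(\Delta_1,\Delta_2,q,u,\text{orientation})$ along a subsequence; once the shift relation is in hand, confinality and the desired contradictions follow immediately from the definitions.
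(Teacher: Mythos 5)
Your proof is correct and follows essentially the same route as the paper's: a contradiction argument with pigeonhole on the finitely many $\propto$-classes and gap values at most $Q$, passing to a subsequence to obtain the limiting relation $y_{\Delta_2}=y_{\Delta_1}w$, which forces the classes to coincide (settling (a)) and forces $y_\Delta=\infe{w}$ to be periodic (settling (b)). The extra bookkeeping you do (stabilizing $u$ and the orientation) and the slight detour through ``ultimately periodic'' plus the Lyndon-word convention are harmless variants of the paper's more direct conclusions.
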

\begin{proof} Suppose that, for every $q_Q\in\N$ there is an integer $m\geq q_Q$ and elements $y_{m,1}$ and $y_{m,2}$
of $B_m$ such that $g(y_{m,1},y_{m,2})\leq Q$. Hence, there exist a strictly increasing sequence $(m_i)_i$ of
positive integers and an integer $r\in\{1,\ldots,Q\}$ such that $\bigl(g(y_{m_i,1},y_{m_i,2})\bigr)_i$ is
constant and equal to $r$. Moreover, since the graph $\Gamma$ is finite, we may assume that $y_{m_i,1}={\mathtt
t}_{m_i}y_{\Delta_1}$ and $y_{m_i,2}={\mathtt t}_{m_i}y_{\Delta_2}$ for every $i$ and some $\propto$-classes
$\Delta_1$ and $\Delta_2$. It then follows that $y_{\Delta_1}u=y_{\Delta_2}$ or $y_{\Delta_2}u=y_{\Delta_1}$ for
some word $u\in A^r$. Hence, $y_{\Delta_1}$ and $y_{\Delta_2}$ are confinal left-infinite words, whence
${\Delta_1}$ and ${\Delta_2}$ are the same $\propto$-class ${\Delta}$. Therefore, for every $m$, $y_{m,1}$ and
$y_{m,2}$ have the same length and are suffixes of the word $y_{\Delta}$ and, so, $y_{m,1}$ and $y_{m,2}$ are
the same word. This proves already $(a)$. Now, notice that $y_{\Delta}u=y_{\Delta}$, meaning that $y_{\Delta}$
is the  periodic left-infinite word $u^{-\infty}$. This shows $(b)$ and completes the proof of the proposition.
\end{proof}

We now fix two more integers.
\begin{definition}[constants $M$   and $k$] \label{def:constants2} We let:
 \begin{enumerate}[label=$\ \;\bullet$,labelsep=*,leftmargin=4ex,parsep=0ex]
\item $M$ be an integer such that $M$ is a multiple of $p_\eta$ and $M$ is greater than or equal to the integer $q_Q$ of Proposition~\ref{prop:borders},  and notice that $M>Q$;

\item $k=M+Q$.
\end{enumerate}
\end{definition}
 The
elements of the set $B_{M}$ will be called the \emph{borders of the solution $\eta$}. We remark that the borders
of $\eta$ are finite words of length $M$ such that, by Proposition~\ref{prop:borders}, for any two distinct
occurrences of borders $y_1$ and $y_2$ in a finite word, either these occurrences  have a gap of size at least
$Q$ between them, or $y_1$ and $y_2$ are the same periodic border $y$. In this case, $y$ is a power of its root
$u$, since $M$ is a multiple of the period $|u|$, and $g(y,y)$ is $|u|$.
\subsection{Getting a $(\V*\D_k,\kappa)$-solution}\label{section:VDk-solution}
As $\V*\D_k$ is a subpseudovariety of $\V*\D$, $\eta$ is a $\V*\D_k$-solution of $\Sigma_\Gamma$ with respect to $(\varphi,\delta)$. The given
pseudovariety $\V$ was assumed to be $\kappa$-reducible. So, by~\cite[Corollary
6.5]{Almeida&Costa&Teixeira:2010}, $\V*\D_k$ is $\kappa$-reducible too. Therefore, there is a
$(\V*\D_k,\kappa)$-solution $\eta'_k:\Gamma\to (\omek{\kappa}{A}{S})^1$ of $\Sigma_\Gamma$ with respect to the
same pair $(\varphi,\delta)$. Moreover, as observed in~\cite[Remark 3.4]{Almeida&Costa&Zeitoun:2005}, one can
constrain the values  $\eta'_k{\sf g}$ of each ${\sf g}\in\Gamma$ with respect to properties which can be tested
in a finite semigroup. Since the prefixes and the suffixes of length at most $k$ can be tested in the finite
semigroup $\ome{A}{\K_k}\times\ome{A}{\D_k}$, we may assume further that $\eta'_k{\sf g}$ and $\eta{\sf g}$ have
the same prefixes and the same suffixes of length at most $k$. We then denote
$${\tt i}_{\sf g}={\tt i}_k\eta'_k{\sf g}={\tt i}_k\eta{\sf g}\quad\mbox{and}\quad {\tt t}_{\sf g}={\tt t}_k\eta'_k{\sf g}={\tt t}_k\eta{\sf g},$$
for each ${\sf g}\in\Gamma$. Notice that, by the simplifications introduced in
Section~\ref{section:simplifications}, if
 $\eta{\sf g}$ is a finite word, then ${\sf g}$ is an edge and $\eta{\sf g}$ is a letter $a_{\sf g}$ and so
${\tt i}_{\sf g}={\tt t}_{\sf g}=a_{\sf g}$. Otherwise, ${\tt i}_{\sf g}$ and ${\tt t}_{\sf g}$ are length $k$
words. In particular, condition $\mathcal C_1(\Gamma,\eta,\eta'_k)$ holds. That is, $\eta'_k{\sf e}=\eta{\sf e}$
for every edge ${\sf e}$ such that $\eta{\sf e}$ is a finite word. On the other hand, Lemma 2.3 (ii)
of~\cite{Costa:2004}, which is stated only for edges, can be extended easily to vertices, so that $\eta'_k{\sf
g}$ can be assumed  to be an infinite pseudoword for every ${\sf g}\in \Gamma$ such that $\eta{\sf g}$ is
infinite. Thus, in particular, $\eta'_k{\sf v}$ is an infinite pseudoword for all vertices ${\sf v}$.

Notice that, for each vertex ${\sf v}$, there exists a border $y_{\sf v}$ of $\eta$ such that
the finite word $y_{\sf v} z_{\sf v}$ is a suffix of $\eta{\sf v}$. On the other hand, by Definitions~\ref{def:constants} and~\ref{def:constants2}, $|z_{\sf v}|\leq L<Q$  and $k=M+Q$. So, as $|y_{\sf v}|=M$,
\begin{equation}\label{eq:eta'k_v}
{\tt t}_{\sf v}=x_{\sf v}y_{\sf v} z_{\sf v}\quad\mbox{and}\quad\eta'_k{\sf v}=\pi_{\sf v}{\tt t}_{\sf v}
 \end{equation}
for some infinite $\kappa$-word $\pi_{\sf v}$ and some word $x_{\sf v}\in A^+$ with $|x_{\sf v}|= Q-|z_{\sf
v}|$.
\subsection{Basic transformations}
The objective of this section is to introduce the basic steps that will allow to transform the $(\V*\D_k,\kappa)$-solution $\eta'_k$ into a
$(\V*\D,\kappa)$-solution $\eta'$. The process of construction of $\eta'$ from $\eta'_k$ is close to the one
used in~\cite{Costa&Nogueira&Teixeira:2015b} to handle with systems of pointlike equations. Both procedures are
supported by (basic) transformations of the form $$a_1\cdots a_{k}\mapsto a_1\cdots a_{j} (a_{i}\cdots
a_j)^{\omega}a_{j+1}\cdots  a_k,$$ which replace words of length $k$ by $\kappa$-words.
 Those procedures differ  in the way the indices $i\leq j$ are determined. In the pointlike case, the only condition
 that a basic transformation had to comply with was that $j$ had to be minimum such that the value of the word $a_1\cdots a_{k}$ under $\delta$ is
 preserved. In the present case, the basic transformations have to preserve the value under $\delta$ as well, but the equations
 $(\alpha{\sf e}){\sf e}=\omega{\sf e}$ impose an extra restriction that is not required by pointlike equations.
 Indeed, we need  $\eta'$ to verify, in particular, $\delta\eta'\alpha{\sf e}=\delta\eta'_k \alpha{\sf e}(=\delta\eta \alpha{\sf
 e})$ and $\delta\eta'{\sf e}=\delta\eta'_k {\sf e}(=\delta\eta {\sf e})$. So, somewhat informally, for a word $a_1\cdots a_{k}$ that has
an occurrence overlapping both the factors $\eta'_k  \alpha{\sf e}$ and $\eta'_k  {\sf e}$ of the pseudoword
$(\eta'_k  \alpha{\sf e})(\eta'_k  {\sf e})$, the introduction of the factor $(a_{i}\cdots  a_j)^{\omega}$ by
the basic transformation should be done either in $\eta'_k  \alpha{\sf e}$ or in $\eta'_k  {\sf e}$, and not in
both simultaneously. The borders of the solution $\eta$ were introduced to help us to deal with this extra
 restriction. Informally speaking, the borders will be used to detect the ``passage'' from the labeling under $\eta'_k$ of
 a vertex $\alpha{\sf e}$ to the labeling of the edge ${\sf e}$ and to avoid that the introduction of $(a_{i}\cdots  a_j)^{\omega}$
  affect the labelings under $\delta$ of $\eta'_k\alpha{\sf e}$ or $\eta'_k{\sf e}$.

Consider an arbitrary word $w=a_1\cdots a_n\in A^+$. An integer $m\in\{M,\ldots,n\}$ will be called a
\emph{bound of $w$} if the factor $w_{[m]}=a_{m'}\cdots a_m$ of $w$ is a border, where $m'=m-M+1$. The bound $m$ will be said to be \emph{periodic} or \emph{non-periodic} according to the border $w_{[m]}$ is periodic or not. If $w$ admits
bounds, then there is a maximum one that we name the \emph{last bound of $w$}.  In this case, if $\ell$ is
the last bound of $w$, then the border $w_{[\ell]}$ will be called the \emph{last border} of $w$. Notice
that, by Proposition~\ref{prop:borders} and the choice of $M$, if $m_1$ and $m_2$ are two bounds of $w$ with
$m_1<m_2$, then either $m_2-m_1>Q$ or $w_{[m_1]}$ and $w_{[m_2]}$ are  the same periodic border.

Let $w=a_1\cdots a_k\in A^+$ be a word of length $k$. Notice that, since $k=M+Q$, if $w$ has a
non-periodic last bound $\ell$, then $\ell$ is the unique bound of $w$. We split the word $w$ in two parts, ${\tt
l}_w$ (the \emph{left-hand of $w$}) and ${\tt r}_{\!w}$ (the \emph{right-hand of $w$}), by setting $${\tt l}_w=a_1\cdots
a_{s}\quad\mbox{and}\quad {\tt r}_w=a_{s+1}\cdots a_{k}$$ where $s$ (the \emph{splitting point of $w$}) is defined as follows:
if $w$ has a last bound $\ell$ then $s=\ell$; otherwise $s=k$. In case $w$ has a
periodic last bound $\ell$, the splitting point $s$ will be said to be  \emph{periodic}. Then, $s$ is not periodic in two situations: either $w$ has a non-periodic last border or $w$ has not a last border.  The factorization
 $$w={\tt l}_w{\tt r}_w$$
  will be called the \emph{splitting factorization of $w$}. We have $s\geq M>Q\geq E$. So, by definition of $E$, there exist integers $i$ and $j$ such that
$s-E<i<j\leq s$ and the factor $e=a_i\cdots a_{j}$ of ${\tt l}_w$ verifies $\delta e=(\delta e)^2$. We begin by fixing the maximum such $j$ and, for that $j$, we fix next an integer $i$ and a word ${\tt e}_w=a_i\cdots a_{j}$, called the \emph{essential factor of $w$}, as follows. Notice that, if the splitting point $s$ is periodic and $u$ is the root of the last border of $w$, then $\delta(u^{n_{\!S}})$ is idempotent and the  left-hand of $w$ is of the form ${\tt l}_w={\tt l}'_wu^{n_S}$. Hence, in this case, $j=s$ and we let ${\tt e}_w=u^{n_{\!S}}$, thus defining $i$ as $j-n_{\!S}|u|+1$. Suppose now that the splitting point is not periodic. In this case we let
 $i$ be the maximum integer such that $\delta(a_i\cdots a_{j})$ is idempotent. The word $w$ can be factorized as $w={\tt l}'_w{\tt e}_w{\tt l}''_w{\tt r}_w,$ where ${\tt l}'_w=a_1\cdots a_{i-1}$. We then denote by
$\widehat w$ the following $\kappa$-word
$$\widehat w={\tt l}'_w{\tt e}_w{\tt e}_w^\omega{\tt l}''_w{\tt r}_w=a_1\cdots a_{j} (a_{i}\cdots  a_j)^{\omega}a_{j+1}\cdots  a_k$$
and notice that $\delta \widehat w=\delta w$. Moreover $|{\tt e}_w {\tt l}''_w|\leq E$ and so $|{\tt l}'_w|\geq
M-E>Q-E=L$. It is also convenient to introduce two
  $\kappa$-words derived from $\widehat w$
\begin{equation}\label{eq:lambda_varrho}
\lambda_k w=a_1\cdots a_{j} (a_{i}\cdots  a_j)^{\omega},\quad \varrho_k w= (a_{i}\cdots  a_j)^{\omega}a_{j+1}\cdots  a_k.
\end{equation}
 This defines two mappings $\lambda_k,\varrho_k: A^k\rightarrow\omek{\kappa}{A}{\Se}$ that can be extended to $\om{A}{\Se}$  as done  in~\cite{Costa&Nogueira&Teixeira:2015b}. Although they are not formally the same mappings used in that paper, because of the different choice of the integers $i$ and $j$, we keep the same notation since the selection process of those integers is absolutely irrelevant for the purpose of the mappings. That is, with the above adjustment the mappings maintain the properties stated in~\cite{Costa&Nogueira&Teixeira:2015b}.

The next lemma presents a property of the $\widehat{\ \ }$-operation that is fundamental to our purposes.
\begin{lemma}\label{lemma:correct_order}
For a word $w=a_1\cdots a_{k+1}\in A^+$ of length $k+1$, let $w_1=a_1\cdots a_{k}$ and $w_2=a_2\cdots a_{k+1}$ be the two factors of $w$ of length $k$. If $\widehat w_1=a_1\cdots a_{j_1} (a_{i_1}\cdots  a_{j_1})^{\omega}a_{j_1+1}\cdots  a_k$ and  $\widehat w_2=a_2\cdots a_{j_2} (a_{i_2}\cdots  a_{j_2})^{\omega}a_{j_2+1}\cdots  a_{k+1}$, then $a_1{\tt l}_{w_2}={\tt l}_{w_1}x$ for some word $x\in A^*$. In particular $j_1\leq j_2$.
\end{lemma}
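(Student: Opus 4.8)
The statement compares the splitting factorizations of the two length-$k$ windows $w_1=a_1\cdots a_k$ and $w_2=a_2\cdots a_{k+1}$ of a single word $w$ of length $k+1$. Everything hinges on how the \emph{splitting points} $s_1$ (of $w_1$) and $s_2$ (of $w_2$) are located, since ${\tt l}_{w_1}=a_1\cdots a_{s_1}$ and ${\tt l}_{w_2}=a_2\cdots a_{s_2+1}$ (re-indexing $w_2$ inside $w$). The conclusion $a_1{\tt l}_{w_2}={\tt l}_{w_1}x$ amounts to showing $s_1\le s_2+1$, i.e. that the splitting point cannot move strictly \emph{left} when the window slides one step to the right. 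So first I would set up the dictionary between positions in $w_1$, positions in $w_2$, and positions in $w$, being careful that a bound $m$ of $w_2$ corresponds to position $m+1$ in $w$ (and the window $w_1$ occupies positions $1,\dots,k$, $w_2$ occupies $2,\dots,k+1$). A border occurrence $w_{[m]}$ is a length-$M$ factor, so a bound of $w_1$ at position $m$ ($M\le m\le k$) is exactly a length-$M$ border ending at position $m$ of $w$; and the bounds of $w_2$ ending at positions $M+1,\dots,k+1$ of $w$ are the bounds of $w_2$ in its own indexing.

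The core case analysis is on whether $w_1$ has a last bound. If $w_1$ has \emph{no} bound, then $s_1=k$, hence $s_1\le k\le s_2+1$ trivially (whatever $s_2$ is, $s_2\le k$). If $w_1$ has a last bound $\ell=s_1\le k$, then the length-$M$ factor of $w$ ending at position $s_1$ is a border; since $s_1\ge M$ and $w_2$ covers positions $2,\dots,k+1$, this same factor lies inside $w_2$ provided $s_1\ge M+1$, i.e. it is a bound of $w_2$ too, giving a bound of $w_2$ at position $\ge s_1$ in $w$, hence $s_2\ge s_1-1\ge s_1-1$, i.e. $s_1\le s_2+1$ — actually one gets $s_1\le s_2+1$ directly. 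The one boundary subtlety is $s_1=M$ exactly: then the border of $w_1$ occupies positions $1,\dots,M$ of $w$, which is \emph{not} contained in $w_2$ (it starts at position $1$), so this argument for producing a bound of $w_2$ fails. I would handle this by noting $M>Q$ and $k=M+Q>M$, so in this subcase $s_1=M<k$ means $w_1$ \emph{does} have a bound but it is the leftmost possible one; here I would argue directly that $s_2\ge M-1$ is impossible to violate in a way that breaks the conclusion, or more cleanly, observe that when $s_1=M$ we have $a_1{\tt l}_{w_2}$ has length $1+s_2$ and ${\tt l}_{w_1}$ has length $M$, and since $s_2\ge M$ would be needed only if $w_2$ itself has a bound $\ge M$ — but even if $w_2$ has no bound, $s_2=k>M-1$, so $1+s_2>M$ and the inclusion ${\tt l}_{w_1}$ is a prefix of $a_1{\tt l}_{w_2}$ holds because both are prefixes of $w$. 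In all cases the string equation $a_1{\tt l}_{w_2}={\tt l}_{w_1}x$ follows because both sides are prefixes of $w$ of comparable length, so it reduces purely to the length inequality $|{\tt l}_{w_1}|\le 1+|{\tt l}_{w_2}|$, i.e. $s_1\le s_2+1$.

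Finally, for the ``in particular'' clause $j_1\le j_2$: recall $j_1$ is chosen as the maximum integer with $s_1-E<j_1\le s_1$ admitting an idempotent-valued factor $a_{i_1}\cdots a_{j_1}$ ending at $j_1$, and similarly $j_2$ (in $w_2$-indexing, so at position $j_2+1$ in $w$). I would translate both to positions in $w$: $j_1$ is the rightmost position $p$ with $s_1-E<p\le s_1$ such that some factor ending at $p$ has idempotent $\delta$-value; $j_2+1$ is the rightmost such $p$ with $s_2-E<p\le s_2+1$. Since $s_1\le s_2+1$ we have $s_1-E< s_2+1$ and the interval $(s_1-E,s_1]$ for $w_1$ sits weakly to the left of $(s_2-E,s_2+1]$ for $w_2$; combined with the fact (from the periodic/non-periodic definition of ${\tt e}_w$) that the maximality of $j$ is taken over exactly these intervals, the rightmost idempotent-ending position for $w_2$ is at least that for $w_1$, hence $j_1\le j_2$. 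The one point needing care is the periodic case, where $j=s$ is forced and ${\tt e}_w=u^{n_S}$ with $u$ the root of the last border: if $s_1$ is a periodic splitting point then its last border has root $u$, and by Proposition~\ref{prop:borders} together with $M>Q$, the last border of $w_2$ (if any) is either the same periodic border shifted, giving $j_2=s_2+1\ge s_1=j_1$, or is non-periodic and lies to the right, again giving $j_2\ge j_1$. I expect this periodic/non-periodic bookkeeping — making sure the shift by one does not silently change the type of the last border or lose it off the left edge when $s_1=M$ — to be the main obstacle; everything else is routine prefix manipulation once the position dictionary is fixed.
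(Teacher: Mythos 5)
Your overall strategy is the paper's: reduce the prefix relation $a_1{\tt l}_{w_2}={\tt l}_{w_1}x$ to the length inequality $s_1\le s_2+1$, prove that inequality by cases on the last bound of $w_1$, and deduce $j_1\le j_2$ from the essential factor of $w_1$ surviving as a candidate for $w_2$. Your treatments of the subcases $\ell_1>M$ (the border occurrence shifts into $w_2$) and $\ell_1=M$ (use $s_2\ge M$) coincide with the paper's.

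The genuine gap is in the case where $w_1$ has no bound. There you need the \emph{lower} bound $s_2\ge k-1$, but you justify it by the upper bound ``whatever $s_2$ is, $s_2\le k$'', which points the wrong way; the claim is not trivial, since a priori $s_2$ could be as small as $M=k-Q$, and then $s_1=k\le s_2+1$ would fail. The missing observation (which is how the paper closes this case) is that $w_2$ cannot have a bound at any internal position $\ell_2<k$: such a bound is a border occupying positions $\ell_2-M+2,\dots,\ell_2+1\le k$ of $w$, hence lying entirely inside $w_1$, so $\ell_2+1$ would be a bound of $w_1$, contradicting the case hypothesis. Therefore $s_2=k$ whether or not $w_2$ has a bound, and $s_1=k\le s_2+1$. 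A secondary caution: your derivation of $j_1\le j_2$ from ``the interval for $w_1$ sits weakly to the left of that for $w_2$'' is not a valid inference for shifted windows in general, because a pair $(i_1,j_1)$ attaining the maximum for $w_1$ with $i_1=s_1-E+1$ (the extreme left of $w_1$'s window) need not satisfy $i_1>s_2+1-E$ and so need not be a candidate in the maximization defining $j_2$. The paper is terse on the same point, but a complete argument should check, in the one configuration where the two windows genuinely differ, that the occurrence $a_{i_1}\cdots a_{j_1}$ really falls inside $w_2$'s window (or that the periodic/large-shift structure forces $j_2\ge j_1$ directly), rather than appealing to the interval ordering alone.
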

\begin{proof} Write $w_2=b_1\cdots b_k$ with $b_i=a_{i+1}$. Let $s_1$ and $s_2$ be the splitting points of $w_1$ and $w_2$ respectively, whence ${\tt l}_{w_1}=a_1\cdots a_{s_1}$ and ${\tt l}_{w_2}=b_1\cdots b_{s_2}=a_2\cdots a_{s_2+1}$. To prove that there exists a word $x$ such that $a_1{\tt l}_{w_2}={\tt l}_{w_1}x$, we have to show that $s_1\leq s_2+1$. Under this hypothesis, we then deduce that $a_{i_1}\cdots  a_{j_1}$ is an occurrence of the essential factor ${\tt e}_{w_1}$  in  ${\tt l}_{w_2}$ which proves that $j_1\leq j_2$.

 Assume first that $w_1$ has a last bound $\ell_1$, in which case $s_1=\ell_1$. By definition, $\ell_1\geq M$. If $\ell_1>M$, then the last border
of $w_1$ occurs in $w_2$, one position to the left relatively to $w_1$. Hence $\ell_1-1$ is a bound of $w_2$ and, so, $w_2$ has a last bound $\ell_2$ such that $\ell_2\geq \ell_1-1$. It follows in this case that  $s_2=\ell_2$ and  $s_1\leq s_2+1$. Suppose now that $\ell_1=M$. Since $s_2\geq M$ by definition, the condition $s_1\leq s_2+1$ holds trivially in this case. Suppose now that $w_1$ has not a last bound. Then $s_1=k$. Moreover, either $w_2$ does not have a last
bound or $k$ is its last bound. In both circumstances $s_2=k$, whence $s_1=s_2\leq s_2+1$. This concludes the proof of the lemma.
\end{proof}

In the conditions of the above lemma and as in~\cite{Costa&Nogueira&Teixeira:2015b},   we define  $\psi_k:(\om{A^{k+1}}{\Se})^1\rightarrow (\om{A}{\Se})^1$ as the only continuous monoid homomorphism which extends the mapping
\begin{alignat*}{2}
      A^{k+1} &    \rightarrow \omek{\kappa}{A}{\Se}
    \\    a_1\cdots a_{k+1} & \mapsto
   (a_{i_1}\cdots  a_{j_1})^{\omega} a_{j_1+1}\cdots a_{j_2}(a_{i_2}\cdots  a_{j_2})^{\omega}
   \end{alignat*}
and let $\theta_k=\psi_k  \Phi_k$. The function $\theta_k:\om{A}{\Se}\to (\om{A}{\Se})^1$  is a continuous
$k$-superposition homomorphism since it is the composition of the continuous $k$-superposition homomorphism
$\Phi_k$ with the continuous homomorphism $\psi_k$. We remark that a word $w=a_1\cdots a_n$ of length $n>k$ has
precisely $r=n-k+1$ factors of length $k$ and
\begin{alignat*}{2}
\theta_k (w) & =  \psi_k (a_1\cdots a_{k+1},a_2\cdots a_{k+2},\ldots , a_{r-1}\cdots a_n)\\
     & =  \psi_k (a_1\cdots a_{k+1}) \psi_k (a_2\cdots a_{k+2})\cdots \psi_k (a_{r-1}\cdots a_n)\\
    & =  (e_1^{\omega} f_1 e_2^{\omega})(e_2^{\omega} f_2e_3^{\omega})\cdots   (e_{r-1}^{\omega} f_{r-1} e_r^{\omega})\\
    & =  e_1^{\omega} f_1 e_2^{\omega} f_2\cdots   e_{r-1}^{\omega} f_{r-1} e_r^{\omega}
           \end{alignat*}
 where, for each $p\in\{1,\ldots , r\}$, $e_p$ is the essential factor ${\tt e}_{w_p}=a_{i_p}\cdots  a_{j_p}$
 of the word $w_p=a_p \cdots a_{k+p-1}$ and $f_p=a_{j_p+1} \cdots    a_{j_{p+1}}$ ($p\neq r$). Above, for each $p\in\{2,\ldots , r-1\}$, we have replaced each expression $e_p^{\omega}e_p^{\omega}$  with  $e_p^{\omega}$ since, indeed, these expressions represent the same $\kappa$-word. More generally, one can certainly replace an expression of the form $x^\omega x^n x^\omega$ with $x^\omega x^n$. Using this reduction rule as long as possible,
  $\theta_k (w)$ can be written as
  $$\theta_k (w) = e_{n_1}^{\omega} {\bar f}_1 {e}_{n_2}^\omega {\bar f}_2\cdots   {e}_{n_q}^{\omega} {\bar f}_{q},$$
 called the \emph{reduced form of $\theta_k (w)$}, where $q\in\{1,\ldots, r\}$, $1={n_1}< n_2<\cdots <n_q\leq r$,  ${\bar f}_p=f_{n_p}\cdots  f_{{n_{p+1}}-1}$ (for $p\in\{1,\ldots,q-1\}$) and ${\bar f}_q$ is $f_{n_q}\cdots  f_{{r}-1}$ if $n_q\neq r$ and it is the empty word otherwise.

\subsection{Definition of the $(\V*\D,\kappa)$-solution $\eta'$}
We are now in conditions to describe the procedure to transform the $(\V*\D_k,\kappa)$-solution $\eta'_k$ into the
$(\V*\D,\kappa)$-solution $\eta'$. The mapping $\eta':\Gamma\to(\omek{\kappa}{A}{S})^1$ is defined, for each ${\sf g}\in\Gamma$, as
 $$\eta'{\sf g}=(\tau_1{\sf g})  (\tau_2{\sf g})  (\tau_3{\sf g}),$$
where, for each $i\in\{1,2,3\}$,  $\tau_i :\Gamma\to(\omek{\kappa}{A}{S})^1$ is a function defined as follows.

First of all, we let $$\tau_2=\theta_k\eta'_k.$$ That $\tau_2$ is well-defined, that is, that $\tau_2{\sf g}$ is indeed a $\kappa$-word for every ${\sf g}\in\Gamma$, follows from
 the fact that $\eta'_k{\sf g}$ is a $\kappa$-word and $\theta_k$ transforms $\kappa$-words into $\kappa$-words
 (see~\cite{Costa&Nogueira&Teixeira:2015b}). Next, for each vertex ${\sf v}$, consider the length $k$ words
 ${\tt i}_{\sf v}={\tt i}_k\eta'_k{\sf v}={\tt i}_k\eta{\sf v}$ and  ${\tt t}_{\sf v}={\tt t}_k\eta'_k{\sf v}={\tt t}_k\eta{\sf v}$. We let
 $$\tau_{1}{\sf v}=\lambda_k  {\tt i}_{\sf v}\quad{and}\quad \tau_{3}{\sf v}=\varrho_k  {\tt t}_{\sf v},$$
  where the mappings $\lambda_k$ and $\varrho_k$ were defined in~\eqref{eq:lambda_varrho}. Note that,  by~\eqref{eq:eta'k_v},  ${\tt t}_{\sf v}=x_{\sf v}y_{\sf v} z_{\sf v}$. Moreover, the occurrence  of $y_{\sf v}$ shown in this factorization is the last occurrence of a border in ${\tt t}_{\sf v}$. Hence, the right-hand ${\tt r}_{{\tt t}_{\sf v}}$ of ${\tt t}_{\sf v}$ is precisely $z_{\sf v}$. Therefore, one has $$\tau_{1}{\sf
v}=\lambda_k{\tt i}_{\sf v}={\tt l}'_{{\tt i}_{\sf v}}{\tt e}_{{\tt i}_{\sf v}}{\tt e}_{{\tt i}_{\sf v}}^\omega\quad\mbox{and}\quad\tau_{3}{\sf v}=\varrho_k {\tt t}_{\sf v}={\tt e}_{{\tt t}_{\sf v}}^\omega {\tt l}''_{{\tt t}_{\sf v}}z_{\sf v}.$$

  Consider now an arbitrary edge ${\sf e}$.  Suppose  that $\eta{\sf e}$  is a finite word. Then, $\eta{\sf e}$ is a letter $a_{\sf e}$
  and $\eta'_k{\sf e}$ is also $a_{\sf e}$ in this case. Then  $\tau_2{\sf e}=\theta_k a_{\sf e}=1$ because $\theta_k$
  is a $k$-superposition homomorphism. Since we want $\eta'{\sf e}$ to be $a_{\sf e}$, we then define, for instance,
  $$\tau_1{\sf e}=a_{\sf e}\quad\mbox{and}\quad\tau_3{\sf e}=1.$$
  Suppose at last that $\eta{\sf e}$ (and so also $\eta'_k{\sf e}$) is an infinite pseudoword. We let
  $$\tau_3{\sf e}=\varrho_k  {\tt t}_{\sf e}$$
   and notice that $\tau_3{\sf e}=\tau_3\omega{\sf e}$. Indeed, as $\eta'_k$ is a $\V*\D_k$-solution of $\Sigma_\Gamma$, it follows from~\eqref{eq:char-psid-VDk} that ${\tt t}_{\sf e}={\mathtt
t}_k\eta'_k{\sf e}={\mathtt t}_k\eta'_k\omega{\sf e}={\tt t}_{\omega{\sf e}}$. The definition of $\tau_{1}{\sf e}$ is more elaborate.
   Let ${\sf v}$ be the vertex $\alpha{\sf e}$ and consider the word ${\tt t}_{\sf v}{\tt i}_{\sf e}=a_1\cdots a_{2k}$. This word
has $r=k+1$ factors of length $k$. Suppose that $\theta_k({\tt t}_{\sf v}{\tt i}_{\sf e})$ is
$e_1^{\omega} f_1 e_2^{\omega} f_2\cdots   e_{r-1}^{\omega} f_{r-1} e_{r}^{\omega}$ and consider its reduced form
$$\theta_k({\tt t}_{\sf v}{\tt i}_{\sf e})=e_{1}^{\omega} {\bar f}_1 {e}_{n_2}^\omega {\bar f}_2\cdots   {e}_{n_q}^{\omega} {\bar f}_{q}.$$
Notice that ${\tt t}_{\sf v}{\tt i}_{\sf e}={\bar f}_{0}{\bar f}_1\cdots {\bar f}_{q}{\bar f}_{q+1}$  for some words ${\bar f}_{0},{\bar f}_{q+1}\in A^*$.
 Hence, there is a (unique) index
$m\in\{1,\ldots,q\}$  such that ${\tt t}_{\sf v}={\bar f}_0{\bar f}_1\cdots {\bar f}_{m-1}{\bar f'}_{m}$ and
${\bar f}_{m}={\bar f'}_{m}{\bar f''}_{m}$ with ${\bar f'}_{m}\in A^*$ and ${\bar f''}_{m}\in A^+$. Then
$\theta_k({\tt t}_{\sf v}{\tt i}_{\sf e})=\beta_1\beta_2$, where $\beta_1=e_{1}^\omega {\bar
f}_1{e}_{n_2}^\omega {\bar f}_2\cdots e_{n_m}^\omega {\bar f'}_{m}$ and $\beta_2={\bar
f''}_{m}e_{n_{m+1}}^\omega {\bar f}_{m+1}\cdots e_{n_q}^\omega {\bar f}_q$ and we let
$$\tau_1{\sf e}=\beta_2={\bar f''}_{m}e_{n_{m+1}}^\omega {\bar f}_{m+1}\cdots e_{n_q}^\omega {\bar f}_q.$$
Note that the word $\beta'_2={\bar f''}_{m}{\bar f}_{m+1}\cdots {\bar f}_q$ is $a_{k+1}\cdots a_{j_{r}}$, whence
$\beta'_2e_{r}^{\omega}=\lambda_k {\tt i}_{\sf e}$.

 The next lemma is a key result that justifies the definition of the $\widehat{\ \ }$-operation.
\begin{lemma}\label{lemma:avanco_edge}
Let ${\sf e}$ be an edge such that $\eta{\sf e}$ is infinite. Then, with the above notation, $\beta_1=\tau_{3}{\sf v}$ and  so  $\theta_k({\tt t}_{\sf v}{\tt i}_{\sf
e})=(\tau_{3}{\sf v})(\tau_{1}{\sf e})$. Moreover, $\delta\tau_1{\sf e}=\delta\lambda_k {\tt i}_{\sf e}$.
\end{lemma}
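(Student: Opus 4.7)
The plan is to analyze the reduced form $\theta_k({\tt t}_{\sf v}{\tt i}_{\sf e}) = e_1^\omega \bar f_1 e_{n_2}^\omega \bar f_2 \cdots e_{n_q}^\omega \bar f_q$ by tracking, for each length-$k$ factor $w_p$ ($p=1,\ldots,k+1$) of $w={\tt t}_{\sf v}{\tt i}_{\sf e}$, how the essential factor $e_p$ and its underlying endpoint $j_p$ evolve as $p$ increases. Since $\tau_3{\sf v}=\varrho_k{\tt t}_{\sf v}=e_1^\omega\, a_{j_1+1}\cdots a_k$, the identity $\beta_1=\tau_3{\sf v}$ amounts to showing that the split at the underlying position $k$ falls inside the first $\bar f$-block of the reduced form, that is $m=1$ and $\bar f'_1=a_{j_1+1}\cdots a_k$; equivalently, $j_{n_2}\geq k$ and every essential factor occurring before the split agrees, as a $\kappa$-word, with $e_1$ (or is absorbed, in the periodic case, by the reduction rule $x^\omega x^n x^\omega\to x^\omega x^n$).

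The analysis splits according to whether the last border $y_{\sf v}$ of ${\tt t}_{\sf v}$ is non-periodic or periodic. In the \emph{non-periodic case}, Proposition~\ref{prop:borders} forces $y_{\sf v}$ to be isolated in $w$: no other border of $\eta$, and no further occurrence of $y_{\sf v}$ itself, lies within gap $Q$ of the tail occurrence of $y_{\sf v}$ in $w$. A direct position count then shows that, for $p=1,\ldots,p^\ast:=Q-|z_{\sf v}|+1$, this tail occurrence of $y_{\sf v}$ remains the last bound of $w_p$ with fixed underlying endpoint $\sigma=k-|z_{\sf v}|$; since in the non-periodic branch the essential factor depends only on $\sigma$ and on $w$, it follows that $e_p=e_1$ with the same underlying endpoint $j_1$ for all such $p$. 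At $p=p^\ast+1$, $y_{\sf v}$ is no longer a bound, and any new last bound comes from a border ending at underlying position $>k$; using $p^\ast\geq E+1$ (which follows from $Q-|z_{\sf v}|\geq E$), the essential factor endpoint $j_{p^\ast+1}$ satisfies $j_{p^\ast+1}>k$. Hence $n_2=p^\ast+1$, $j_{n_2}>k$, $m=1$ and $\bar f'_1=a_{j_1+1}\cdots a_k$, giving $\beta_1=\tau_3{\sf v}$. In the \emph{periodic case} ($y_{\sf v}=u^{n_u}$ with $u$ a root of $\eta$), the essential factor is, by definition, always $u^{n_{S}}$ as long as the splitting point is periodic with root $u$, so $e_p=e_1$ as a $\kappa$-word throughout this range even though the underlying endpoint $j_p$ may drift rightwards by multiples of $|u|$; the $f_p$'s in between are then powers of $u$, and the reduction rule $x^\omega x^n x^\omega\to x^\omega x^n$ merges all these blocks into the opening $e_1^\omega$. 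Combined with Proposition~\ref{prop:borders}, the facts that $z_{\sf v}$ does not begin with $u$ (when $|z_{\sf v}|>0$) and that $M$ is a multiple of $p_\eta$ guarantee that the first genuinely new essential factor in the reduced form has underlying endpoint $>k$, yielding the same conclusion.

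For the moreover part the equality $\delta\tau_1{\sf e}=\delta\lambda_k{\tt i}_{\sf e}$ is a direct computation in $S$: each $\delta e_{n_j}$ is idempotent by the defining property of the essential factor, so $(\delta e_{n_j})^\omega=\delta e_{n_j}$, and since each $e_{n_j}$ is a suffix of $a_{k+1}\cdots a_{j_{n_j}}$, the idempotent $\delta e_{n_j}$ can be absorbed into the adjacent factors. Applied to $\delta\beta_2$ the resulting product telescopes to $\delta(a_{k+1}\cdots a_{j_r})$, and the same absorption applied to $\delta\lambda_k{\tt i}_{\sf e}=\delta(\beta'_2\, e_r^\omega)$ gives the same value. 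The main obstacle I anticipate is the periodic case: keeping track of how the underlying endpoint $j_p$ drifts rightwards while the $\kappa$-word $e_p$ stays constant, and verifying that the reduction rule absorbs exactly the intermediate blocks, relies crucially on the asymmetric condition that $z_{\sf v}$ avoids $u$ as a prefix.
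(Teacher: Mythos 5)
Your proof follows the same skeleton as the paper's: reduce $\beta_1=\tau_3{\sf v}$ to the statement that the split at position $k$ falls inside the first $\bar f$-block of the reduced form, then treat the last border $y_{\sf v}$ of ${\tt t}_{\sf v}$ separately according to whether it is non-periodic or periodic, using Proposition~\ref{prop:borders} and the position counts built into $Q=L+E$ and $k=M+Q$. The first half is essentially right (your $p^\ast$ is the paper's $h=Q-|z_{\sf v}|+1\geq E+1$; in the periodic case $d\geq 1$ together with $|z_{\sf v}|<|u|$ pushes the merged block past $k$). Two small corrections there: the condition must be $j_{n_2}>k$, not $j_{n_2}\geq k$ (if $j_{n_2}=k$ then $m=2$ and $\beta_1$ picks up an extra $\omega$-power), and the reduction does not merge everything ``into $e_1^\omega$'' -- it produces $e_1^\omega u^{d}$, and it is precisely because the split at $k$ lands inside this $u^d$ that $\beta_1=e_1^\omega z_{\sf v}=\tau_3{\sf v}$.

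The genuine gap is in the ``moreover'' half. Your absorption argument requires that \emph{every} essential factor $e_{n_j}$ occurring in $\beta_2=\tau_1{\sf e}$ be a suffix of $a_{k+1}\cdots a_{j_{n_j}}$, i.e.\ $i_{n_j}>k$ for all $j\geq 2$, but your case analysis only establishes that the \emph{right} endpoint of the \emph{first} new essential factor exceeds $k$. For $w_{h+1}$ the stronger fact $i_{h+1}>k$ does follow from the same count (the essential factor lies in positions $(k+h-E,\,k+h]$ and $h>E$), but you never state it; and for the later windows $p>h+1$ nothing in your sketch controls where the essential factors sit -- this is exactly what Lemma~\ref{lemma:correct_order} supplies in the paper (the left-hands can only move right as the window slides, so all subsequent essential factors stay inside the suffix of ${\tt i}_{\sf e}$). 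The omission is most serious in the periodic case, which you close with ``yielding the same conclusion'': there, windows whose splitting points are still governed by occurrences of $y_{\sf v}$ in the $u$-periodic region have essential factor $u^{n_{S}}$ possibly straddling position $k$, and one must first observe that every such block is absorbed into the opening block by the reduction, so that any block surviving in $\beta_2$ has its splitting point \emph{not} determined by those occurrences (the paper's dichotomy $m=q$ versus $m\neq q$), and only then does a Case-1-style count give $k<i_p$ for $p\in\{n_{m+1},\ldots,r\}$. Without these two ingredients the telescoping of $\delta\beta_2$ to $\delta(a_{k+1}\cdots a_{j_r})=\delta\beta'_2$ is unsupported.
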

\begin{proof} We begin by recalling that ${\tt t}_{\sf v}{\tt i}_{\sf e}=a_1\cdots a_{2k}$ and
$$\theta_k({\tt t}_{\sf v}{\tt i}_{\sf e})=e_1^{\omega} f_1 e_2^{\omega} f_2\cdots   e_{r-1}^{\omega} f_{r-1} e_{r}^{\omega}=e_{1}^{\omega} {\bar f}_1 {e}_{n_2}^\omega {\bar f}_2\cdots   {e}_{n_q}^{\omega} {\bar f}_{q},$$
 where $e_p$ is the essential factor ${\tt e}_{w_p}=a_{i_p}\cdots  a_{j_p}$ of the word $w_p=a_p \cdots a_{k+p-1}$  and
$f_p=a_{j_p+1} \cdots  a_{j_{p+1}}$ for each $p$. Note also that $\lambda_k {\tt i}_{\sf
e}=\beta'_2e_{r}^{\omega}$, $e_{r}$ is a suffix of $\beta'_2$ and $\delta e_r$ is idempotent. So, to prove the
equality $\delta\tau_1{\sf e}=\delta\lambda_k {\tt i}_{\sf e}$ it suffices to show that $\delta\tau_1{\sf
e}=\delta\beta'_2$. We know from~\eqref{eq:eta'k_v} that  ${\tt t}_{\sf v}=x_{\sf v}y_{\sf v} z_{\sf v}$ with $1\leq|x_{\sf v}|\leq Q$. So, $x_{\sf v}=a_1\cdots a_{h-1}$, $y_{\sf v}=a_{h}\cdots a_{M+h-1}$ and $z_{\sf v}=a_{M+h}\cdots a_{k}$ for some $h\in\{2,\ldots,Q+1\}$. There are two cases to verify.

\begin{enumerate}[label=\textbf{Case \arabic*.},ref=\arabic*,labelwidth=-10ex,labelsep=*,leftmargin=3ex,itemindent=7ex,parsep=1ex]
\item  $y_{\sf v}$ is a non-periodic border. Consider the factor $w_h=a_h \cdots a_{k+h-1}$ of
${\tt t}_{\sf v}{\tt i}_{\sf e}$. By the choice of $M$ and $k$, the prefix $y_{\sf v}$ is the only occurrence of
a border in $w_h$. Hence, $M$ is the last bound of $w_h$ and, so, its splitting point. It follows that
$w_h=y_{\sf v}\cdot z_{\sf v}a_{k+1}\cdots a_{k+h-1}$ is the splitting factorization of $w_h$.  Therefore, as
one can  verify for an arbitrary $p\in\{1,\ldots,h\}$, there is only one occurrence of a border in $w_{p}$, precisely $y_{\sf v}$, and the splitting factorization of $w_{p}$ is
  $$w_{p}=a_{p}\cdots a_{h-1}y_{\sf v}\cdot z_{\sf v}a_{k+1}\cdots a_{k+p-1},$$
whence $e_p=e_1$ with $j_p=j_1\leq M+h-1$ and, so, $f_p=1$ for $p< h$. So, the prefix $e_1^{\omega} f_1
e_2^{\omega} \cdots f_{h-1} e_{h}^{\omega}$ of $\theta_k({\tt t}_{\sf v}{\tt i}_{\sf e})$ reduces to
$e_1^{\omega}$. Consider now the factor $w_{h+1}=a_{h+1} \cdots a_{k+h}$. Hence, either $w_{h+1}$ does not have
a last bound or $k$ is its last bound. In both situations, the splitting point of $w_{h+1}$ is $k$ and its
splitting factorization is $w_{h+1}=w_{h+1}\cdot 1$. Therefore, one deduces from Lemma~\ref{lemma:correct_order}
that, for every $p\in\{h+1,\ldots,r\}$, the occurrence
   $a_{i_p}\cdots  a_{j_p}$  of the essential factor ${\tt e}_{w_p}$  in $w_p$ is, in fact, an occurrence
   in the suffix $w'=a_{k+h-E}\cdots a_{2k}=a_{M+L+h}\cdots a_{2k}$ of ${\tt t}_{\sf v}{\tt i}_{\sf e}$.
   Since $|x_{\sf v}y_{\sf v}|=M+h-1$ and $|z_{\sf v}|\leq L$, it follows that $k=|x_{\sf v}y_{\sf v} z_{\sf v}|<M+L+h$,
   whence $w'$ is a suffix of ${\tt i}_{\sf e}$ and so $k<i_p<j_p$ for all $p\in\{h+1,\ldots,r\}$.
    This means, in particular, that the $\omega$-power $e_{h+1}^{\omega}$ is introduced at the
    suffix ${\tt i}_{\sf e}$ of ${\tt t}_{\sf v}{\tt i}_{\sf e}$. Hence
    $\beta_1=e_1^{\omega} f_1 e_2^{\omega} \cdots f_{h-1} e_{h}^{\omega}a_{j_{h}+1}\cdots a_k$
    and its reduced form is  $e_1^{\omega}a_{j_{1}+1}\cdots a_k=\tau_{3}{\sf v}$, which proves that
    $\beta_1$ and $\tau_{3}{\sf v}$ are the same $\kappa$-word. Moreover, from
$k<i_{p}$, one deduces that the word $e_p$ is a suffix of $a_{k+1}\cdots a_{j_p}$, which proves that
$\delta\tau_1{\sf e}=\delta\beta'_2$.

\item  $y_{\sf v}$ is a periodic border. Let $u$ be the root of $y_{\sf v}$.  Then, since $M$ was fixed as a
multiple of $|u|$, $y_{\sf v}=u^{M_u}$ where $M_u=\frac{M}{|u|}$. If the prefix $y_{\sf v}$
   is the only occurrence of a border in $w_h$, then one deduces the lemma as in Case 1 above. So, we
   assume that there is another occurrence of a border $y$ in $w_h$. Hence, by Proposition~\ref{prop:borders} and
    the choice of $M$ and $k$, $y$ is precisely $y_{\sf v}$. Furthermore, since $u$ is a Lyndon word and $k=M+Q$
    with $Q<M$, $w_h=y_{\sf v}u^{d}w'_h$ for some positive integer $d$ and some word $w'_h\in A^*$ such that $u$ is not a prefix of
    $w'_h$. Notice that, since $u$ is not a prefix of $z_{\sf v}$ by definition of this word, $z_{\sf v}$ is a proper prefix of $u$.
     On the other hand $w_h=u^dy_{\sf v}w'_h$ and the occurrence of $y_{\sf v}$ shown in this factorization is the last occurrence of $y_{\sf v}$ in $w_h$. Thus,
     $$w_h=u^dy_{\sf v}\cdot w'_h$$
     is the splitting factorization of $w_h$. Therefore $\widehat{w_h}=u^dy_{\sf v}(u^{n_{\!S}})^\omega w'_h$ and
     $e_h=u^{n_{\!S}}$. More generally,  for any $p\in\{1,\ldots,h\}$, $y_{\sf v}$ is a factor of $w_p$ and it
   is the only border that occurs in $w_p$. Hence, the splitting point of $w_p$ is periodic and
   $e_p=u^{n_{\!S}}$. Moreover, as one can verify, $j_1=M+h-1$ and the prefix $e_1^{\omega} f_1
e_2^{\omega} \cdots f_{h-1} e_{h}^{\omega}$ of $\theta_k({\tt t}_{\sf v}{\tt i}_{\sf e})$ is $e_1^{\omega} (u
(e_1^{\omega})^{|u|})^d$ and so, analogously to Case 1, it reduces to $e_1^{\omega}u^d$. Since $z_{\sf v}$ is
a proper prefix of $u$ and $d\geq 1$, $k<j_h$. This allows already deduce that the reduced form of
    $\beta_1$ is  $(u^{n_{\!S}})^\omega z_{\sf v}=\tau_{3}{\sf v}$, thus concluding the proof of the first part of the lemma. Now, there
    are two possible events. Either $m=q$ and $\beta_2={\bar f''}_{m}=\beta'_2$, in which case
     $\delta\tau_1{\sf e}=\delta\beta'_2$ is trivially verified. Or $m\neq q$ and the $\omega$-power
     $e_{n_{m+1}}^\omega$ was not eliminated in the reduction process of $\theta_k({\tt t}_{\sf v}{\tt i}_{\sf
     e})$. This means that the splitting point of the word $w_{n_{m+1}}$ is not determined by one of
     the occurrences of the border $y_{\sf v}$ in the prefix $a_1\cdots a_{k+h-1}$ of ${\tt t}_{\sf v}{\tt i}_{\sf
     e}$. Then, as in Case 1 above, one deduces that
    $k<i_{p}$ for each $p\in\{ n_{m+1},\ldots, r\}$ and, so, that $\delta\tau_1{\sf e}=\delta\beta'_2$.
  \end{enumerate}
  In both cases  $\beta_1=\tau_{3}{\sf v}$ and $\delta\tau_1{\sf e}=\delta\lambda_k {\tt i}_{\sf e}$. Hence, the proof of the lemma is complete.
\end{proof}

Notice that, as shown in the proof of Lemma~\ref{lemma:avanco_edge} above, if a vertex ${\sf v}$ is such that
$y_{\sf v}$ is a periodic border with root $u$, then $\tau_3{\sf v}=(u^{n_{\!S}})^\omega z_{\sf v}$. So, the  definition
of the mapping $\tau_3$ on vertices assures condition $\mathcal C_2(\Gamma,\eta,\eta')$.
\subsection{Proof that $\eta'$ is a  $(\V*\D,\kappa)$-solution}
This section will be dedicated to showing that $\eta'$ is a $(\V*\D,\kappa)$-solution of $\Sigma_\Gamma$ with respect to the pair $(\varphi,\delta)$ verifying conditions $\mathcal C_1(\Gamma,\eta,\eta')$ and $\mathcal C_3(\Gamma,\eta,\eta')$.

We begin by noticing that $\eta'{\sf g}$ is a $\kappa$-word for every ${\sf g}\in\Gamma$. Indeed, as observed above, each $\tau_2{\sf g}$ is a $\kappa$-word. That both $\tau_1{\sf g}$ and $\tau_3{\sf g}$ are $\kappa$-words too, is easily seen by their definitions.  Let us now show  the following properties.
 \begin{proposition}\label{prop:same-value-S}
 Conditions $\delta\eta'=\varphi$, $\mathcal C_1(\Gamma,\eta,\eta')$ and $\mathcal C_3(\Gamma,\eta,\eta')$ hold.
 \end{proposition}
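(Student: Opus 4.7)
The plan is to verify the three assertions of the proposition in turn, relying on a single workhorse identity together with Lemma~\ref{lemma:avanco_edge}. The identity I shall invoke is
$$\delta\bigl(\lambda_k({\tt i}_k\pi)\cdot\theta_k\pi\cdot\varrho_k({\tt t}_k\pi)\bigr)=\delta\pi,\qquad\pi\in\om{A}{\Se},$$
which is the analogue for the present $\lambda_k,\theta_k,\varrho_k$ of the $\delta$-preservation property established in~\cite{Costa&Nogueira&Teixeira:2015b}. Its proof transfers verbatim, since the modified selection rule for the essential factor of a length-$k$ word does not affect the idempotency absorption that drives it (the inserted factor is still $\delta$-idempotent).

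I first dispose of $\mathcal C_1(\Gamma,\eta,\eta')$. After the simplifications of Section~\ref{section:simplifications}, any ${\sf g}\in\Gamma$ with $\eta{\sf g}\in A^*$ is an edge ${\sf e}$ with $\eta{\sf e}=a_{\sf e}\in A$; by construction $\tau_1{\sf e}=a_{\sf e}$ and $\tau_3{\sf e}=1$, while $\tau_2{\sf e}=\theta_k a_{\sf e}=\psi_k\Phi_k a_{\sf e}=\psi_k 1=1$ (because $\Phi_k$ vanishes on $A_k$), so $\eta'{\sf e}=a_{\sf e}=\eta{\sf e}$. Given $\mathcal C_1$ and $\delta\eta=\varphi$, the equality $\delta\eta'=\varphi$ reduces to checking $\delta\eta'{\sf g}=\delta\eta{\sf g}$ when $\eta{\sf g}$ is infinite. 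For a vertex ${\sf v}$, apply the workhorse identity to $\pi=\eta'_k{\sf v}$: since ${\tt i}_{\sf v}={\tt i}_k\eta'_k{\sf v}$ and ${\tt t}_{\sf v}={\tt t}_k\eta'_k{\sf v}$, this immediately yields $\delta\eta'{\sf v}=\delta\eta'_k{\sf v}=\delta\eta{\sf v}$. For an edge ${\sf e}$ with $\eta{\sf e}$ infinite, Lemma~\ref{lemma:avanco_edge} gives $\delta\tau_1{\sf e}=\delta\lambda_k{\tt i}_{\sf e}$, and the workhorse identity applied to $\pi=\eta'_k{\sf e}$ then delivers $\delta\eta'{\sf e}=\delta\lambda_k{\tt i}_{\sf e}\cdot\delta\theta_k\eta'_k{\sf e}\cdot\delta\varrho_k{\tt t}_{\sf e}=\delta\eta'_k{\sf e}=\delta\eta{\sf e}$.

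For $\mathcal C_3(\Gamma,\eta,\eta')$, suppose $\eta{\sf v}=u\pi$ with $u\in A_L$. Since $|u|\leq L<Q<M\leq k$, the word $u$ is a prefix of ${\tt i}_{\sf v}$. In the splitting factorization ${\tt i}_{\sf v}={\tt l}'_{{\tt i}_{\sf v}}{\tt e}_{{\tt i}_{\sf v}}{\tt l}''_{{\tt i}_{\sf v}}{\tt r}_{{\tt i}_{\sf v}}$, the estimate $|{\tt l}'_{{\tt i}_{\sf v}}|\geq M-E>Q-E=L\geq|u|$ recorded right after the definition of $\widehat w$ shows that the idempotent factor ${\tt e}_{{\tt i}_{\sf v}}$ inserted by $\lambda_k$ sits strictly past position $|u|$. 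Consequently $\tau_1{\sf v}=\lambda_k{\tt i}_{\sf v}$ begins with $u$, say $\tau_1{\sf v}=u\mu$, so $\eta'{\sf v}=u\pi'$ with $\pi'=\mu\cdot\tau_2{\sf v}\cdot\tau_3{\sf v}$. To conclude $\delta\pi'=\delta\pi$ I rerun the proof of the workhorse identity after excising the prefix $u$ from both sides: this is legitimate precisely because every $\omega$-power whose $\delta$-value gets absorbed during that proof, starting with $(\delta {\tt e}_{{\tt i}_{\sf v}})^\omega$, is anchored at some position strictly greater than $|u|$, so the whole absorption chain can be carried out inside the tail that starts at letter $|u|+1$.

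The main obstacle I foresee is this last step of $\mathcal C_3$: the condition demands $\delta\pi'=\delta\pi$ rather than the weaker $\delta(u\pi')=\delta(u\pi)$, which is all the workhorse identity alone would give. The key point, position-by-position tracking of the idempotency absorptions, is made possible by the margin $L<M-E$ built into the choice of constants in Definitions~\ref{def:constants} and~\ref{def:constants2}, together with the fact that the border suffix structure of ${\tt t}_{\sf v}$ recorded in~\eqref{eq:eta'k_v} is irrelevant on the initial side ${\tt i}_{\sf v}$ where the excision takes place.
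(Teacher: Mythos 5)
Your argument coincides with the paper's: the same preservation identity~\eqref{eq:preservation_delta} together with Lemma~\ref{lemma:avanco_edge} handles $\delta\eta'=\varphi$ (vertices directly, infinite edges via $\delta\tau_1{\sf e}=\delta\lambda_k{\tt i}_{\sf e}$), the finite-edge case together with the simplifications of Section~\ref{section:simplifications} gives $\mathcal C_1$, and the bound $|{\tt l}'_{{\tt i}_{\sf v}}|>L$ gives $\mathcal C_3$. You are in fact more explicit than the paper on $\mathcal C_3$, which simply asserts $\delta\pi=\delta\pi'$; note only that your excision argument literally yields $\delta\pi'=\delta\rho$ where $\rho$ is the tail of $\eta'_k{\sf v}$ after $u$, and identifying $\delta\rho$ with $\delta\pi$ for the tail $\pi$ of the original $\eta{\sf v}$ relies on the freedom, invoked in Section~\ref{section:VDk-solution}, to constrain $\eta'_k$ by properties testable in a finite semigroup --- a step the paper leaves equally implicit.
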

\begin{proof}
As $\eta'_k$ is a $\V*\D_k$-solution of $\Sigma_\Gamma$ with respect to $(\varphi,\delta)$ and, so, the equality
$\delta\eta'_k=\varphi$ holds, to deduce that $\delta\eta'=\varphi$ holds it suffices to establish the equality
$\delta  \eta' =\delta  \eta'_k$. Consider first a vertex ${\sf v}\in \Gamma$. Then $\tau_{1}{\sf
v}=\lambda_k{\tt i}_{\sf v}={\tt l}'_{{\tt i}_{\sf v}}{\tt e}_{{\tt i}_{\sf v}}{\tt e}_{{\tt i}_{\sf v}}^\omega$
and  $\tau_{3}{\sf v}=\varrho_k {\tt t}_{\sf v}={\tt e}_{{\tt t}_{\sf v}}^\omega {\tt l}''_{{\tt t}_{\sf v}}z_{\sf v}$. In this case, the equality $\delta \eta'_k{\sf v}=\delta \eta'{\sf v}$ is a direct
application of~\cite[Proposition 5.3]{Costa&Nogueira&Teixeira:2015b}, where the authors proved that
\begin{equation}\label{eq:preservation_delta}
\delta \pi=\delta\bigl((\lambda_k  {\tt i}_k\pi)(\theta_k \pi)(\varrho_k  {\tt t}_k\pi)\bigr)
\end{equation}
 for every pseudoword $\pi$. Moreover, by definition of the $\widehat{\ \ }$-operation, $|{\tt l}'_{{\tt i}_{\sf
v}}|>L$. Therefore, $\eta{\sf v}$ and $\eta'{\sf v}$ are of the form $\eta{\sf v}=u\pi$ and $\eta'{\sf v}=u\pi'$
with $u\in A^L$ and $\delta\pi=\delta\pi'$. So, condition $\mathcal C_3(\Gamma,\eta,\eta')$ holds.

Consider next an edge ${\sf e}\in \Gamma$. If $\eta'_k{\sf e}$ is a finite word $a_{\sf e}$, then $\eta' {\sf
e}=(\tau_1{\sf e})(\tau_2{\sf e})(\tau_3{\sf e})=a_{\sf e}\cdot 1\cdot 1=a_{\sf e}=\eta'_k{\sf e}$, whence
$\delta \eta' {\sf e}=\delta  \eta'_k {\sf e}$ holds trivially. Moreover, since $\eta'_k {\sf e}=\eta {\sf e}$
in this case and every vertex is labeled under $\eta$ by an infinite pseudoword, it follows that condition
$\mathcal C_1(\Gamma,\eta,\eta')$ holds. Suppose at last that $\eta'_k{\sf e}$ is infinite and let ${\sf
v}=\alpha{\sf e}$. Then $\tau_{3}{\sf e}=\varrho_k {\tt t}_{\sf e}$. On the other hand, by
Lemma~\ref{lemma:avanco_edge}, $\delta\tau_1{\sf e}=\delta\lambda_k {\tt i}_{\sf e}$. Hence,
by~\eqref{eq:preservation_delta} and since $\delta$ is a homomorphism, $\delta \eta'{\sf e}=\delta
\bigl((\tau_1{\sf e})  (\tau_2{\sf e})  (\tau_3{\sf e})\bigr)=\delta\bigl((\lambda_k  {\tt i}_{\sf
e})(\theta_k\eta'_k{\sf e})(\varrho_k  {\tt t}_{\sf e})\bigr)=\delta \eta'_k{\sf e}$. This ends the proof of the
proposition.
\end{proof}

Consider an arbitrary edge ${\sf v}\xrightarrow {\sf e}{\sf w}$ of $\Gamma$. To achieve the objectives of this section it remains to prove that  $\V*\D$ satisfies $(\eta' {\sf v})(\eta'{\sf e})= \eta' {\sf w}$. Since $\eta'_k$ is a
$\V*\D_k$-solution of $\Sigma_\Gamma$, $\V*\D_k$ satisfies $(\eta'_k {\sf v})(\eta'_k{\sf e})= \eta'_k {\sf w}$. Hence, by~\eqref{eq:char-psid-VDk},
${\tt i}_{\sf v}={\tt i}_k\bigl((\eta'_k {\sf v})(\eta'_k{\sf e})\bigr)={\tt i}_k(\eta'_k{\sf w})={\tt i}_{\sf w}$ and ${\tt t}_k\bigl((\eta'_k {\sf v})(\eta'_k{\sf e})\bigr)={\tt t}_k(\eta'_k{\sf w})={\tt t}_{\sf w}$. Thus, $\tau_{1}{\sf v}=\lambda_k{\tt i}_{\sf v}={\tt l}'_{{\tt i}_{\sf v}}{\tt e}_{{\tt i}_{\sf v}}{\tt e}_{{\tt i}_{\sf v}}^\omega={\tt l}'_{{\tt i}_{\sf w}}{\tt e}_{{\tt i}_{\sf w}}{\tt e}_{{\tt i}_{\sf w}}^\omega=\lambda_k{\tt i}_{\sf w}=\tau_{1}{\sf w}$ and  $\tau_{3}{\sf w}=\varrho_k {\tt t}_{\sf w}={\tt
e}_{{\tt t}_{\sf w}}^\omega {\tt l}''_{{\tt t}_{\sf w}}z_{\sf w}$. As shown in the proof of~\cite[Proposition 5.4]{Costa&Nogueira&Teixeira:2015b}, it then follows that $\V*\D$ satisfies ${\tt e}_{{\tt i}_{\sf w}}^\omega\theta_k\bigl((\eta'_k {\sf v})(\eta'_k {\sf e})\bigr){\tt e}_{{\tt t}_{\sf w}}^\omega= {\tt e}_{{\tt i}_{\sf w}}^\omega\theta_k(\eta'_k {\sf w}){\tt e}_{{\tt t}_{\sf w}}^\omega$ and, so,
\begin{equation}\label{eq:VD_models}
\V*\D\models (\tau_{1}{\sf v})\theta_k\bigl((\eta'_k {\sf v})(\eta'_k {\sf e})\bigr)(\tau_{3}{\sf w})=(\tau_{1}{\sf w})\theta_k(\eta'_k {\sf w})(\tau_{3}{\sf w})=\eta' {\sf w}.
\end{equation}
On the other hand, from the fact that $\theta_k$ is a $k$-superposition homomorphism one deduces
\begin{equation}\label{eq:theta_product}
\theta_k\bigl((\eta'_k {\sf v})(\eta'_k {\sf e})\bigr)=\theta_k(\eta'_k {\sf v})\theta_k\bigl({\tt t}_{\sf v}(\eta'_k {\sf
e})\bigr)=\theta_k(\eta'_k {\sf v})\theta_k({\tt t}_{\sf v}{\tt i}_{\sf e})\theta_k(\eta'_k {\sf e}).
\end{equation}

Suppose that $\eta'_k{\sf e}$ is an infinite pseudoword. In this case ${\tt t}_{\sf e}={\tt t}_{\sf w}$, whence
$\tau_{3}{\sf e}=\tau_{3}{\sf w}$. Moreover, by Lemma~\ref{lemma:avanco_edge}, $\theta_k({\tt t}_{\sf v}{\tt
i}_{\sf e})=(\tau_{3}{\sf v})(\tau_{1}{\sf e})$. Therefore, by conditions~\eqref{eq:VD_models}
and~\eqref{eq:theta_product}, $\V*\D$ satisfies $(\eta' {\sf v})(\eta'{\sf e})=\eta' {\sf w}$. Assume now that
$\eta'_k{\sf e}$ is a finite word, whence $\eta'_k{\sf e}=a_{\sf e}\in A$ and $\eta'{\sf e}=a_{\sf e}$. Since
$\eta$ is a $\D$-solution of $\Sigma_\Gamma$, $\D\models(\eta'{\sf v})a_{\sf e}=\eta'{\sf w}$ and, thus, ${\bf
d}_{\sf v}a_{\sf e}={\bf d}_{\sf w}$. Hence the left-infinite words ${\bf d}_{\sf v}$ and ${\bf d}_{\sf w}$ are
confinal and, so, $\propto$-equivalent. Hence ${\bf d}_{\sf v}=y_\Delta z_{\sf v}$, ${\bf d}_{\sf w}=y_\Delta
z_{\sf w}$ and $y_{\sf v}=y_{\sf w}={\tt t}_k y_\Delta$, where $\Delta$ is the $\propto$-class of  ${\bf d}_{\sf
v}$ and ${\bf d}_{\sf w}$. It follows that $y_\Delta z_{\sf v}a_{\sf e}=y_\Delta z_{\sf w}$ and  ${\tt
t}_k\bigl({\tt t}_{\sf v}a_{\sf e})={\tt t}_{\sf w}$. In this case,  $\theta_k\bigl((\eta'_k {\sf v})(\eta'_k {\sf e})\bigr)=
 \theta_k(\eta'_k {\sf v})\theta_k({\tt t}_{\sf v}a_{\sf e})$. On the other hand, ${\tt t}_{\sf v}a_{\sf e}=a_1\cdots a_ka_{k+1}=a_1{\tt t}_{\sf w}$ is a word of length $k+1$ and, so, $\theta_k({\tt t}_{\sf v}a_{\sf e})=\psi_k({{\tt t}_{\sf v}a_{\sf e}})$ is of the form
 $$\theta_k({\tt t}_{\sf v}a_{\sf e})=e_1^{\omega} fe_2^{\omega}.$$
 The splitting factorizations of ${\tt t}_{\sf v}$ and ${\tt t}_{\sf w}$ are, respectively, ${\tt t}_{\sf v}=x_{\sf v}y_{\sf v}\cdot z_{\sf v}$ and ${\tt t}_{\sf w}=x_{\sf w}y_{\sf w}\cdot z_{\sf w}$. Since $y_{\sf v}=y_{\sf w}$, it follows that $e_1={\tt e}_{{\tt t}_{\sf v}}={\tt e}_{{\tt t}_{\sf w}}=e_2$.

 Suppose that $z_{\sf v}a_{\sf e}=z_{\sf w}$. In this case it is clear that $f=1$, so that $\theta_k({\tt t}_{\sf v}a_{\sf e})={\tt e}_{{\tt t}_{\sf v}}^\omega$. Since $\theta_k(\eta'_k {\sf v})$ ends with ${\tt e}_{{\tt t}_{\sf v}}^\omega$, it then follows that $\theta_k\bigl((\eta'_k {\sf v})(\eta'_k {\sf e})\bigr)=\theta_k\eta'_k {\sf v}=\tau_{2}{\sf v}$.  Therefore, $(\tau_{1}{\sf v})\theta_k\bigl((\eta'_k {\sf v})(\eta'_k {\sf e})\bigr)(\tau_{3}{\sf w})=(\tau_{1}{\sf v})(\tau_{2}{\sf v})(\tau_{3}{\sf w})$. On the other hand,
 $$\tau_{3}{\sf w}=\varrho_k{\tt t}_{\sf w}={\tt e}_{{\tt t}_{\sf w}}^\omega {\tt l}''_{{\tt t}_{\sf w}}z_{\sf w}={\tt e}_{{\tt t}_{\sf v}}^\omega
 {\tt l}''_{{\tt t}_{\sf v}}z_{\sf v}a_{\sf e}=(\tau_{3}{\sf v})a_{\sf e}.$$
 So, by~\eqref{eq:VD_models}, one has that $\V*\D$ satisfies $(\eta' {\sf v})a_{\sf e}=(\tau_{1}{\sf v})(\tau_{2}{\sf v})(\tau_{3}{\sf v})a_{\sf e}=(\tau_{1}{\sf v})(\tau_{2}{\sf v})(\tau_{3}{\sf w})=\eta'{\sf w}$.

 Suppose now that $z_{\sf v}a_{\sf e}\neq z_{\sf w}$. In this case, one deduces from the equality $y_\Delta z_{\sf v}a_{\sf e}=y_\Delta z_{\sf w}$, that
 $y_\Delta$ is a periodic left-infinite word. Let $u$ be its root, so that $y_\Delta=\infe{u}$, ${\tt e}_{{\tt t}_{\sf v}}=u^{n_{\!S}}$ and ${\tt l}''_{{\tt t}_{\sf v}}={\tt l}''_{{\tt t}_{\sf w}}=1$. Since, by definition, $u$ is a primitive word which is not a prefix of $z_{\sf v}$ nor a prefix of $z_{\sf w}$, we conclude that $z_{\sf v}a_{\sf e}=u$ and $z_{\sf w}=1$. In this case $f=u$,  whence $\theta_k({\tt t}_{\sf v}a_{\sf e})={\tt e}_{{\tt t}_{\sf v}}^\omega u$. Then, $\theta_k\bigl((\eta'_k {\sf v})(\eta'_k {\sf e})\bigr)=(\theta_k\eta'_k {\sf v})u=(\tau_{2}{\sf v})u$.  Therefore, $(\tau_{1}{\sf v})\theta_k\bigl((\eta'_k {\sf v})(\eta'_k {\sf e})\bigr)(\tau_{3}{\sf w})=(\tau_{1}{\sf v})(\tau_{2}{\sf v})u(\tau_{3}{\sf w})$. Moreover,
 $$u(\tau_{3}{\sf w})=u{\tt e}_{{\tt t}_{\sf w}}^\omega {\tt l}''_{{\tt t}_{\sf w}}z_{\sf w}=u(u^{n_{\!S}})^\omega=(u^{n_{\!S}})^\omega u={\tt e}_{{\tt t}_{\sf v}}^\omega {\tt l}''_{{\tt t}_{\sf v}} z_{\sf v}a_{\sf e}=(\tau_{3}{\sf v})a_{\sf e}.$$
 Therefore, using~\eqref{eq:VD_models}, one deduces as above that $\V*\D$ satisfies $(\eta' {\sf v})a_{\sf e}=\eta'{\sf w}$.

We have proved the main theorem of the paper.
\begin{theorem}\label{theo:main}
 If \V\  is  $\kappa$-reducible, then $\V*\D$ is  $\kappa$-reducible.
 \end{theorem}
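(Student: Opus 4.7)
My plan is to combine the ingredients assembled in Sections 3.1--3.6. The hypothesis that $\V$ is $\kappa$-reducible, together with Corollary 6.5 of~\cite{Almeida&Costa&Teixeira:2010}, already yields a $(\V*\D_k,\kappa)$-solution $\eta'_k$ of $\Sigma_\Gamma$ with respect to $(\varphi,\delta)$, refined so that the length-$k$ prefixes and suffixes of $\eta'_k{\sf g}$ coincide with those of $\eta{\sf g}$. The $\kappa$-word $\eta'{\sf g}=(\tau_1{\sf g})(\tau_2{\sf g})(\tau_3{\sf g})$ constructed in Section 3.6 is the natural ``infinitary'' upgrade of $\eta'_k$, obtained by inserting idempotent $\omega$-powers at positions picked out by the borders of $\eta$. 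The theorem then reduces to proving two things about $\eta'$: that $\delta\eta'=\varphi$ (the preceding Proposition, which rests on the $\delta$-preservation identity~\eqref{eq:preservation_delta} together with Lemma~\ref{lemma:avanco_edge} for the edge boundaries), and that $\V*\D$ satisfies $(\eta'{\sf v})(\eta'{\sf e})=\eta'{\sf w}$ for every edge ${\sf v}\xrightarrow{\sf e}{\sf w}$.

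For the second point I would use $\V*\D=\bigcup_k\V*\D_k$ together with characterization~\eqref{eq:char-psid-VDk}, which turns the pseudoidentity over $\V*\D$ into three matching conditions: equal infinite prefixes, equal infinite suffixes, and $\V$-agreement of $\Phi_m$ for every $m$. Prefix/suffix matching is immediate from the construction of $\tau_1,\tau_3$ out of ${\tt i}_{\sf g},{\tt t}_{\sf g}$ and from the fact that $\eta'_k$ was chosen to match the prefixes and suffixes of $\eta$. For the $\Phi$-part, the $k$-superposition property of $\theta_k$ yields the factorization $\theta_k\bigl((\eta'_k{\sf v})(\eta'_k{\sf e})\bigr)=\theta_k(\eta'_k{\sf v})\cdot\theta_k({\mathtt t}_{\sf v}{\mathtt i}_{\sf e})\cdot\theta_k(\eta'_k{\sf e})$, reducing the problem to showing $\theta_k({\mathtt t}_{\sf v}{\mathtt i}_{\sf e})=(\tau_3{\sf v})(\tau_1{\sf e})$ modulo $\V*\D$. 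Combined with the already-established fact that $\V*\D_k$ satisfies the equation, this gives what is needed.

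The real obstacle is the boundary identity $\theta_k({\mathtt t}_{\sf v}{\mathtt i}_{\sf e})=(\tau_3{\sf v})(\tau_1{\sf e})$, i.e.\ Lemma~\ref{lemma:avanco_edge}. Pointlike reducibility (the setting of~\cite{Costa&Nogueira&Teixeira:2015b}) needs only $\delta$-preservation and therefore permits greedy insertion of $\omega$-powers within each length-$k$ window; here, however, an insertion that straddles the $\alpha{\sf e}$-${\sf e}$ boundary would spoil the factorization of $\eta'{\sf e}$ and wreck the equation. The borders of $\eta$ are designed precisely to mark a canonical passage from ${\tt t}_{\sf v}$ to ${\tt i}_{\sf e}$: by Proposition~\ref{prop:borders}, distinct borders sit more than $Q$ apart, and non-periodic borders never repeat within that distance. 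Consequently, in each length-$k$ window of ${\mathtt t}_{\sf v}{\mathtt i}_{\sf e}$ the last border pins down a splitting point whose essential factor lies entirely on one side of the boundary. The case split (periodic versus non-periodic last border) handles periodic repetition, which is absorbed by the reduction rule $x^\omega x^n x^\omega = x^\omega x^n$ applied to the root $u$. A separate direct check is required when $\eta{\sf e}$ is a single letter $a_{\sf e}$ (so $\tau_2{\sf e}=1$): here ${\bf d}_{\sf v}a_{\sf e}={\bf d}_{\sf w}$ forces ${\bf d}_{\sf v}\propto{\bf d}_{\sf w}$ and equal borders, and a brief subcase split according to whether $z_{\sf v}a_{\sf e}=z_{\sf w}$ closes the argument.
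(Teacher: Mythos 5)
Your proposal follows essentially the same route as the paper: obtain $\eta'_k$ from~\cite[Corollary 6.5]{Almeida&Costa&Teixeira:2010} with matching length-$k$ prefixes and suffixes, define $\eta'=(\tau_1)(\tau_2)(\tau_3)$ via the border-controlled insertions, verify $\delta\eta'=\varphi$ through~\eqref{eq:preservation_delta} and Lemma~\ref{lemma:avanco_edge}, and settle the edge equations by the superposition factorization~\eqref{eq:theta_product}, the boundary identity $\theta_k({\tt t}_{\sf v}{\tt i}_{\sf e})=(\tau_3{\sf v})(\tau_1{\sf e})$, and the separate letter-edge case split on $z_{\sf v}a_{\sf e}=z_{\sf w}$. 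The only cosmetic difference is that the paper passes from the $\V*\D_k$-satisfaction to the $\V*\D$-identity~\eqref{eq:VD_models} by invoking the proof of Proposition 5.4 of~\cite{Costa&Nogueira&Teixeira:2015b} rather than checking the characterization~\eqref{eq:char-psid-VDk} for every $m$ directly, and the boundary identity in fact holds as an equality of $\kappa$-words, not merely modulo $\V*\D$.
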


 This result applies, for instance, to the pseudovarieties $\Sl$, $\G$, $\J$ and ${\bf R}$. Since the
 $\kappa$-word problem for the pseudovariety ${\bf LG}$ of local groups is already solved~\cite{Costa&Nogueira&Teixeira:2015},
 we obtain the following corollary.
 \begin{corollary}
 The pseudovariety $\LG$ is  $\kappa$-tame.
 \end{corollary}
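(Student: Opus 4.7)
The plan is to verify the two ingredients that together constitute $\kappa$-tameness of a pseudovariety $\W$: namely, that $\W$ is $\kappa$-reducible and that the $\kappa$-word problem for $\W$ is decidable (this being the standard formulation from Almeida and Steinberg). So I would split the proof of the corollary along these two lines, with the first part being a direct application of Theorem~\ref{theo:main} and the second part a direct citation.

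First I would invoke the identification $\LG = \G * \D$, already recalled in the introduction as one of the classical instances of the equality $\V * \D = \LV$ (valid because $\G$ is local in the sense of Tilson). This lets me rewrite the statement ``$\LG$ is $\kappa$-reducible'' as ``$\G * \D$ is $\kappa$-reducible''. Next I would apply Theorem~\ref{theo:main} with $\V = \G$: since $\G$ is non-locally trivial and is $\kappa$-reducible (by Ash's theorem \cite{Ash:1991}, reformulated in terms of $\kappa$-reducibility in~\cite{Almeida&Steinberg:2000b}), the theorem gives that $\G * \D$, and hence $\LG$, is $\kappa$-reducible.

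For the second ingredient, I would simply cite~\cite{Costa&Nogueira&Teixeira:2015}, where the $\kappa$-word problem for $\LG$ is shown to be decidable; this is the companion result explicitly highlighted in the introduction as the missing piece needed to settle $\kappa$-tameness of $\LG$. Combining the $\kappa$-reducibility established above with this decidability result yields $\kappa$-tameness of $\LG$ by the very definition of the notion.

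There is no real obstacle here: the corollary is purely a matter of assembling Theorem~\ref{theo:main}, the locality of $\G$, Ash's theorem, and the decidability result of~\cite{Costa&Nogueira&Teixeira:2015}. The only thing to be careful about is to make sure we are using the version of ``$\kappa$-tame'' that matches the prerequisites of the cited results, namely $\kappa$-reducibility relative to arbitrary finite graph equation systems plus decidability of the $\kappa$-word problem; both of these are the formulations used throughout the paper, so the combination is immediate.
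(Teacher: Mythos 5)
Your proposal is correct and matches the paper's own (implicit) argument: the paper derives the corollary exactly by combining Theorem~\ref{theo:main} applied with $\V=\G$ (using $\LG=\G*\D$ and the $\kappa$-reducibility of $\G$) with the decidability of the $\kappa$-word problem for $\LG$ from~\cite{Costa&Nogueira&Teixeira:2015}. Nothing further is needed.
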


\paragraph{\textbf{Final remarks.}} In this paper we fixed our attention on the canonical signature $\kappa$, while in~\cite{Costa&Nogueira&Teixeira:2015b} we dealt with a more generic class of signatures $\sigma$ verifying certain undemanding conditions. Theorem~\ref{theo:main} is still valid for such generic signatures $\sigma$ but we preferred to treat only the instance of the signature $\kappa$ to keep the proofs clearer and a little less technical.


\end{document}